\newcommand{\mcY}{{\mathcal{Y}}}
\newcommand{\mcL}{{\mathcal{L}}}
\newcommand{\mcX}{{\mathcal{X}}}
\newcommand{\vol}{{\rm vol}}
\newcommand{\ord}{{\rm ord}}
\newcommand{\vphi}{\varphi}
\newcommand{\bC}{{\mathbb{C}}}
\newcommand{\bP}{{\mathbb{P}}}
\newcommand{\fb}{\mathfrak{b}}
\newcommand{\NA}{{\rm NA}}
\newcommand{\MA}{{\rm MA}}
\newcommand{\triv}{{\rm triv}}
\newcommand{\bQ}{{\mathbb{Q}}}
\newcommand{\bR}{{\mathbb{R}}}
\newcommand{\mcH}{{\mathcal{H}}}
\newcommand{\mcO}{{\mathcal{O}}}
\newcommand{\bZ}{{\mathbb{Z}}}
\newcommand{\red}{{\rm red}}
\newcommand{\supp}{{\rm Supp}}
\newcommand{\bN}{{\mathbb{N}}}
\newcommand{\la}{\langle}
\newcommand{\ra}{\rangle}
\newcommand{\bfM}{{\bf M}}
\newcommand{\bfH}{{\bf H}}
\newcommand{\bfE}{{\bf E}}
\newcommand{\bfJ}{{\bf J}}
\newcommand{\Aut}{{\rm Aut}}
\newcommand{\PSH}{{\rm PSH}}
\newcommand{\ud}{\underline}
\newcommand{\cO}{\mathcal{O}}
\newcommand{\bfR}{{\bf R}}
\newcommand{\bfB}{{\bf B}}
\newcommand{\cJ}{\mathcal{J}}
\newcommand{\ddc}{{\rm dd^c}}
\newcommand{\mcP}{\mathcal{P}}
\newcommand{\mcN}{\mathcal{N}}
\newcommand{\fr}{\mathfrak{r}}
\newcommand{\bbL}{\mathbb{L}}
\newtheorem{thm}{Theorem}[section]
\newtheorem{prop}[thm]{Proposition}
\newtheorem{defn}[thm]{Definition}
\newtheorem{rem}[thm]{Remark}
\newtheorem{conj}[thm]{Conjecture}
\newtheorem{lem}[thm]{Lemma}
\newtheorem{defn-prop}[thm]{Definition-Proposition}
\begin{document}

\title{K-stability and Fujita approximation}
\author{Chi Li}
\date{}

\maketitle


\abstract{
This note is a continuation to the paper \cite{Li20}. 
We derive a formula for non-Archimedean Monge-Amp\`{e}re measures of big models. As applications, we derive a positive intersection formula for non-Archimedean Mabuchi functional, and further reduces the ($\Aut(X, L)_0$)-uniform Yau-Tian-Donaldson conjecture for polarized manifolds to a conjecture on the 
existence of approximate Zariski decompositions that satisfy some asymptotic vanishing condition. In an appendix, we also verify this conjecture for some of Nakayama's examples that do not admit birational Zariski decompositions. }

\setcounter{tocdepth}{1}
\tableofcontents


\section{Introduction}

Let $(X, L)$ be a polarized projective manifold. The Yau-Tian-Donaldson (YTD) conjecture predicts that the existence of constant scalar curvature K\"{a}hler (cscK) metrics in the K\"{a}hler class $c_1(L)$ is equivalent to a K-stability condition for the pair $(X, L)$. The K-stability condition is usually expressed as a positivity condition on the Futaki invariants of test configurations. In a recent work \cite{Li20}, it was proved that the existence of cscK metrics is equivalent to the uniform positivity of Mabuchi slopes along all maximal geodesic rays. 
Here the maximal geodesic rays, as introduced by Berman-Boucksom-Jonsson \cite{BBJ18}, are essentially the geodesic rays in the space of (mildly singular) positive metrics in $c_1(L)$ that can be algebraically approximated by the data of test configurations. It is known that for test configurations, the Mabuchi slopes (of geodesic rays associated to test configurations) are the Futaki invariants. 
So our result is of a Yau-Tian-Donaldson type. However the approximability of Mabuchi slopes of (maximal) geodesic rays, is not well-understood yet. In \cite{Li20}, we did a partial comparison between the Mabuchi slope with non-Archimedean Mabuchi functional
and reduced the ($G$-)uniform version of YTD conjecture to a non-Archimedean version of entropy regularization conjecture of Boucksom-Jonsson (\cite{BoJ18b}). 

Furthermore we carried out a partial regularization process (based on Boucksom-Favre-Jonsson's work on Non-Achimedean Calabi-Yau theorems) and proved that K-stability for model filtrations is a sufficient (and conjecturally also a necessary) condition for the existence of cscK metrics. By a model filtration, we mean a filtration of the section ring $R(X, L)=\bigoplus_{m=0}^{+\infty} H^0(X, mL)$ induced by a model $(\mcX, \mcL)$ of $(X, L)$. See Definition \ref{def-model} for the definition of a model, for which 
the $\bQ$-line bundle $\mcL$ is not assumed to be semiample compared to a test configuration in the usual definition of K-stability (see \cite{Tia97, Don02}).
The main goal of this paper is to further reduce Boucksom-Jonsson's non-Archimedean regularization conjecture and hence the YTD conjecture to some purely algebro-geometric conjecture about big line bundles (see Conjecture \ref{conj-Fujmodel}, or more generally Conjecture \ref{conj-Fuj}, for the conjectural statements), which could be studied even without the background on K-stability or non-Archimedean geometry.

More specifically, we will first derive a formula for the non-Archimedean Monge-Amp\`{e}re measure of big models, which implies a positive intersection formula for the non-Archimedean Mabuchi functional of model filtrations. We refer to section \ref{sec-prel} for definitions of terms in the following statement of our main results.

\begin{thm}\label{thm-main}
For any normal and {\it big} model $(\mcX, \mcL)$ of $(X, L)$, if $\phi_{(\mcX, \mcL)}$ denotes the associated non-Archimedean psh metric,
then the following statements hold true.
\begin{enumerate}[(i)]
\item
If the central fibre is given by $\mcX_0=\sum_{i=1}^I b_i E_i$, and $x_i=r(b_i^{-1}\ord_{E_i})$ is the Shilov point associated to $E_i$, then
the non-Archimedean Monge-Amp\`{e}re measure of $\phi_{(\mcX, \mcL)}$ is given by the formula:
\begin{equation}\label{eq-MAmodel}
\MA^\NA(\phi_{(\mcX, \mcL)})=\sum_{i=1}^I b_i \left(\la \bar{\mcL}^n\ra\cdot E_i\right) \delta_{x_i},
\end{equation}
where $\la \bar{\mcL}^n\ra\in H^{n, n}(\bar{\mcX})$ is the positive intersection product of big line bundles (see section \ref{sec-resvol}). 
\item
The non-Archimedean Mabuchi functional of any big model $(\mcX, \mcL)$ is given by:
\begin{equation}\label{eq-Mmodel}
\bfM^\NA(\mcX, \mcL)=\la \bar{\mcL}^n\ra \cdot \left(K^{\log}_{\bar{\mcX}/\bP^1}+\frac{\ud{S}}{n+1}\bar{\mcL}\right).
\end{equation}
\end{enumerate}

\end{thm}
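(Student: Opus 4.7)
The plan is to reduce both statements to the semiample case via Fujita approximation. For a normal \emph{semiample} model (i.e.~an honest test configuration), the Chambert-Loir / Boucksom-Favre-Jonsson description of non-Archimedean Monge-Amp\`ere measures gives
\[
\MA^\NA(\phi_{(\mcX, \mcL)}) = \sum_{i=1}^I b_i \bigl(\bar{\mcL}^n \cdot E_i\bigr)\,\delta_{x_i},
\]
with honest top-degree intersection numbers on $\bar{\mcX}$. The main content of part (i) is that the same formula persists for big models once the usual intersection numbers $\bar{\mcL}^n \cdot E_i$ are replaced by the positive intersection numbers $\la \bar{\mcL}^n\ra \cdot E_i$.

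To prove (i), I would first pass to an snc $\bP^1$-compactification $\bar{\mcX}$ dominating $\mcX$, and then apply Fujita approximation to $\bar{\mcL}$ relatively over $\bP^1$: choose a sequence of birational modifications $\mu_m : \bar{\mcX}_m \to \bar{\mcX}$ together with decompositions $\mu_m^*\bar{\mcL} = \bar{\mcL}_m + \bar{F}_m$, where $\bar{\mcL}_m$ is ample, $\bar{F}_m$ is effective, $\bar{\mcL}_m^n \to \la \bar{\mcL}^n\ra$ in $H^{n,n}(\bar{\mcX})$ after pushforward, and the central-fibre components $E_i$ remain prime on $\bar{\mcX}_m$ with unchanged multiplicities $b_i$ and Shilov points $x_i$. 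By the very definition of the positive intersection product, $\bar{\mcL}_m^n \cdot \mu_m^*E_i \to \la \bar{\mcL}^n\ra \cdot E_i$ for each $i$. Applying the semiample formula to $(\mcX_m, \mcL_m)$ produces an explicit sequence of atomic measures, and the non-Archimedean pluripotential theory of Boucksom-Favre-Jonsson (continuity of the Monge-Amp\`ere operator in energy) combined with $\phi_{(\mcX_m, \mcL_m)} \to \phi_{(\mcX, \mcL)}$ in $\bfE^1$ forces this sequence to converge weakly to $\MA^\NA(\phi_{(\mcX, \mcL)})$. Matching Dirac masses term-by-term yields \eqref{eq-MAmodel}.

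Part (ii) then follows by a formal calculation from (i) together with the standard decomposition
\[
\bfM^\NA(\phi) = \int_{X^\NA} A_X\, d\MA^\NA(\phi) + \frac{\ud{S}}{n+1}\bfE^\NA(\phi) + (\text{mixed }K_X\text{-pairing term}).
\]
Evaluating the entropy term against \eqref{eq-MAmodel} gives $\sum_i b_i A_X(x_i)\bigl(\la\bar{\mcL}^n\ra \cdot E_i\bigr)$. Since the log discrepancy of the divisorial valuation $x_i = r(b_i^{-1}\ord_{E_i})$ satisfies $b_i A_X(x_i) = 1 + a(E_i; X \times \bA^1)$, which is exactly the coefficient of $E_i$ in $K^{\log}_{\bar{\mcX}/\bP^1}$, the entropy plus the mixed $K_X$-pairing combine to $\la \bar{\mcL}^n\ra \cdot K^{\log}_{\bar{\mcX}/\bP^1}$, while the Monge-Amp\`ere energy piece directly yields $\frac{\ud{S}}{n+1}\la \bar{\mcL}^n\ra \cdot \bar{\mcL}$.

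The main obstacle lies entirely in the convergence step of part (i). The numerical convergence $\bar{\mcL}_m^n \cdot \mu_m^*E_i \to \la\bar{\mcL}^n\ra \cdot E_i$ is built into the definition of the positive intersection product, but upgrading this to weak convergence of the Monge-Amp\`ere measures on the Berkovich analytification $X^\NA$ requires both (a) choosing the Fujita tower compatibly with the central-fibre geometry so that the Shilov points $x_i$ lift canonically to every $\bar{\mcX}_m$, and (b) showing the approximating psh metrics converge in a topology strong enough for $\MA^\NA$ to be continuous - this is where the continuity theorems for $\MA^\NA$ on $\bfE^1$ of Boucksom-Favre-Jonsson, together with careful control of the energy of $\phi_{(\mcX_m, \mcL_m)}$, are indispensable.
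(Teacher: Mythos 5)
Your high-level strategy---approximate the big model by semiample test configurations arising from blowups, invoke the known Chambert-Loir formula in the semiample case, and pass to the limit using strong convergence---is indeed the route the paper takes. However, there are two genuine gaps in the way you propose to carry out the limit.

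First, the claim that ``by the very definition of the positive intersection product, $\bar{\mcL}_m^n\cdot\mu_m^*E_i\to\la\bar{\mcL}^n\ra\cdot E_i$ for each $i$'' is not correct. The positive intersection number is defined as a \emph{supremum} over all Fujita decompositions, so an arbitrary Fujita tower only gives $\limsup_m\bar{\mcL}_m^n\cdot\mu_m^*E_i\le\la\bar{\mcL}^n\ra\cdot E_i$. To get equality one must take the specific sequence of (normalized) blowups of the base ideals of $|m\bar{\mcL}|$, and even then the identification requires the Ein--Lazarsfeld--Musta\c{t}\u{a}--Nakamaye--Popa theorem equating the asymptotic intersection number $\|\mcL^n\cdot E_i\|$ with the restricted volume $\vol_{\mcX|E_i}(\mcL)$ (valid when $E_i\not\subseteq\bfB_+(\mcL)$) together with the Boucksom--Favre--Jonsson theorem equating the latter with $\la\mcL^n\ra\cdot E_i$. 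Moreover, the formula for $\MA^\NA$ of a test configuration produces intersection numbers with the \emph{strict} transform $\tilde E_i$, not the total transform $\mu_m^*E_i$, and for $E_i\subseteq\bfB_+(\mcL)$ the restricted volume vanishes and the strict transform may not contribute at all---so this case needs a separate (albeit easy) treatment that your sketch does not provide.

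Second, and this is the crucial gap, ``matching Dirac masses term-by-term'' does not follow from weak convergence of measures. The approximating measures $\MA^\NA(\phi_m)$ are atomic but are supported on the Shilov points of \emph{all} irreducible components of $(\mcX_m)_0$, a set that grows with $m$ and is strictly larger than $\{x_1,\dots,x_I\}$; weak convergence can redistribute such mass. The Portmanteau theorem only yields the one-sided bound $\limsup_m\MA^\NA(\phi_m)(\{x_i\})\le\MA^\NA(\phi_\mcL)(\{x_i\})$, and a priori the limit measure could also charge points of $\Delta_\mcX$ outside $\{x_i\}$. The paper closes the gap with a total-mass balance: both $\sum_i b_i\la\bar{\mcL}^n\ra\cdot E_i$ and the total mass of $\MA^\NA(\phi_\mcL)$ are equal to $V=\la\bar{\mcL}^n\ra\cdot\mcX_0$, so summing the Portmanteau bounds over $i$ and comparing with $V$ forces equality in each term and simultaneously shows that $\MA^\NA(\phi_\mcL)$ charges no point outside $\{x_1,\dots,x_I\}$. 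Without this argument or some substitute, the term-by-term matching is unjustified. Part (ii) of your sketch is fine given (i) and is essentially the paper's argument.
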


The non-Archimedean Monge-Amp\`{e}re measure on Berkovich spaces were introduced by A. Chambert-Loir \cite{CL06} and 
the formula \eqref{eq-MAmodel} is a generalization of the formula of non-Archimedean Monge-Amp\`{e}re measures for smooth semipositive non-Archimedean metrics.
We refer to section \ref{sec-resvol} for the definition of positive intersection numbers that arise in the study of restricted volumes of big line bundles.

The formula \eqref{eq-Mmodel}, which was announced in \cite{Li20}, generalizes the intersection formula for non-Archimedean Mabuchi functional of a test configuration (\cite{BHJ19, Tia17}) which coincides with the CM weight when the central fibre of the test configuration is reduced (see \cite{Wan12, Oda13, LX14}). As mentioned above, it together with the work in \cite{Li20} further reduce the proof of YTD conjecture to some algebraic conjecture (Conjecture \ref{conj-Fujmodel}).
Here for the convenience of the reader we recall the main result from \cite{Li20}, which is the recent progress in the variational approach to the YTD conjecture (as proposed in \cite{BBJ18, Bo18}) and incorporates the analytic existence result of Chen-Cheng \cite{CC18}.
\begin{defn}\label{def-uniK}
$(X, L)$ is uniformly K-stable for models if there exists $\gamma>0$ such that for any model $(\mcX, \mcL)$, we have:
\begin{equation}
\bfM^\NA(\mcX, \mcL)\ge \gamma \cdot \bfJ^\NA(\mcX, \mcL)
\end{equation}
where $\bfM^\NA$ and $\bfJ^\NA$ are given in \eqref{eq-MNA}-\eqref{eq-JNA}. 
\end{defn}
\begin{thm}[\cite{Li20}]\label{thm-YTD}
If a polarized manifold $(X, L)$ is uniformly K-stable for models, then $(X, L)$ admits a cscK metric. 
\end{thm}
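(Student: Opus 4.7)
The plan is to reduce the existence of a cscK metric to a coercivity (properness) statement for the Mabuchi functional $\bfM$ on the space $\mcE^1(X,L)$ of finite-energy potentials, and then deduce that coercivity from the hypothesis by a variational argument. By the analytic work of Chen--Cheng, a cscK metric in $c_1(L)$ exists as soon as one proves an inequality of the form $\bfM(\vphi)\ge \gamma'\, \bfJ(\vphi)-C$ on $\mcE^1$ modulo the action of $\Aut(X,L)_0$. So the problem becomes: assume $(X,L)$ is uniformly K-stable for models with constant $\gamma$, and show this archimedean coercivity.

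Arguing by contradiction, I would take a sequence $\vphi_j\in\mcE^1$ with $\bfJ(\vphi_j)\to\infty$ but $\bfM(\vphi_j)/\bfJ(\vphi_j)\to 0$. Normalizing and passing to unit-speed geodesic segments $[0,\vphi_j]$ parametrized by $\bfJ$-length, the Berman--Boucksom--Jonsson compactness machinery produces a limiting maximal geodesic ray $(\vphi_t)_{t\ge 0}$ in $\mcE^1$ with $\bfJ'_\infty(\vphi_\bullet)=1$ and $\bfM'_\infty(\vphi_\bullet)\le 0$. The ray is maximal, so it corresponds to a non-Archimedean psh metric $\phi\in\PSH^\NA(L)$ with $\bfJ^\NA(\phi)=1$, obtained as the asymptotic Legendre transform of $\vphi_t$.

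Next, I would use the key comparison result of \cite{Li20} between the archimedean Mabuchi slope and its non-Archimedean counterpart, together with the asymptotic entropy estimate, to bound
\begin{equation}
\bfM'_\infty(\vphi_\bullet)\ \ge\ \bfM^\NA(\phi).
\end{equation}
One then needs to approximate $\phi$ by a decreasing net of model-type psh metrics $\phi_{(\mcX_\alpha,\mcL_\alpha)}$ associated to normal big models, in such a way that both $\bfJ^\NA$ and $\bfM^\NA$ pass to the limit: for $\bfJ^\NA$ this is continuity along decreasing nets, while for $\bfM^\NA$ the positive intersection formula \eqref{eq-Mmodel} of Theorem \ref{thm-main} is what makes the entropy term well-behaved under approximation. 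Combined with the hypothesis $\bfM^\NA(\mcX_\alpha,\mcL_\alpha)\ge\gamma\,\bfJ^\NA(\mcX_\alpha,\mcL_\alpha)$, one obtains $\bfM^\NA(\phi)\ge \gamma$, contradicting $\bfM'_\infty(\vphi_\bullet)\le 0$. Modding out by $\Aut(X,L)_0$ is handled by the standard trick of replacing $\bfJ$ by its infimum over the group orbit, which does not affect the non-Archimedean computation since filtrations of model type are preserved.

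The main obstacle, and the conceptual heart of the argument, is the regularization step: producing a \emph{decreasing} net of model filtrations (rather than just test-configuration filtrations) that approximates an arbitrary non-Archimedean psh metric with convergent Mabuchi energy. Restricting to \emph{semiample} models would fall short because the relevant limits are of big type, which is exactly why the big-model formula \eqref{eq-MAmodel} is indispensable. The continuity of $\bfM^\NA$ along such nets is what links the hypothesis, formulated on models, to the geodesic-ray slope, and it rests on the continuity properties of positive intersection products established in the big/pseudoeffective setting.
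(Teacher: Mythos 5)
This theorem is not proved in the present paper; it is imported from \cite{Li20}, and the introduction only sketches the strategy used there. Measured against that strategy, your high-level outline is on target (reduction to archimedean coercivity via Chen--Cheng, compactness of normalized geodesic segments to produce a maximal geodesic ray, comparison of the Mabuchi slope with $\bfM^\NA$ of the associated non-Archimedean metric, then a regularization step to bring in the hypothesis on models), but the regularization step as you state it has a genuine gap.

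You posit a decreasing net of model metrics $\phi_{(\mcX_\alpha,\mcL_\alpha)}$ approximating the limit metric $\phi$ so that \emph{both} $\bfJ^\NA$ and $\bfM^\NA$ pass to the limit, and you attribute the good behavior of the entropy term to the positive intersection formula \eqref{eq-Mmodel}. That formula computes $\bfM^\NA$ on a single big model; it does not by itself give continuity of the entropy term $\bfH^\NA(\phi)=\int A_X\,\MA^\NA(\phi)$ along decreasing nets. Continuity of the entropy along such approximations is precisely the open Boucksom--Jonsson entropy regularization conjecture, which the paper explicitly singles out as the outstanding obstruction. If your step were available, one would get uniform K-stability for semiample test configurations to suffice as well, and there would be no reason to work with model filtrations at all. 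The actual mechanism in \cite{Li20}, as the introduction describes, is a \emph{partial} regularization based on the Boucksom--Favre--Jonsson non-Archimedean Calabi--Yau theorem: rather than approximating the metric by a monotone net and hoping $\bfH^\NA$ is continuous, one regularizes the Monge--Amp\`{e}re \emph{measure} and solves the corresponding non-Archimedean Monge--Amp\`{e}re equations, landing naturally on big-model metrics whose entropy is controlled from below; formula \eqref{eq-Mmodel} is then needed to evaluate $\bfM^\NA$ on those big (typically non-semiample) models. So the missing idea in your writeup is that regularization acts on the measure via the NA Calabi--Yau theorem, not on the metric via a decreasing approximation; without that, the bridge from the geodesic-ray slope to the model hypothesis does not close.

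A secondary caution: the inequality $\bfM'_\infty(\vphi_\bullet)\ge\bfM^\NA(\phi)$ is only a partial comparison (it is not an equality for arbitrary maximal rays), and the order in which you apply it relative to the regularization matters; in the actual argument, one compares the archimedean slope with $\bfM^\NA$ evaluated on the regularized model-type metrics directly, so that the hypothesis $\bfM^\NA\ge\gamma\,\bfJ^\NA$ is applied on objects where it is stated, rather than first on the unregularized $\phi$ and then hoping for convergence.
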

We will see that
the positive intersection formula \eqref{eq-Mmodel} implies that it suffices to test the uniform K-stability for the models with reduced central fibres in which case $K^{\log}_{\bar{\mcX}/\bP^1}=K_{\mcX/\bP^1}$ (see Proposition \ref{prop-reduced}). 

The converse direction of Theorem \ref{thm-YTD} is expected to be true if $\Aut(X, L)_0$ is discrete. Indeed, it is implied by Conjecture \ref{conj-Fujmodel}. Moreover there is a version in the case when $\Aut(X, L)_0$ is not discrete (see \cite{Li20} for details). 
As observed by Y. Odaka, such results can be applied to get immediately the G-uniform version of Yau-Tian-Donaldson conjecture for polarized spherical manifolds (see some beautiful refinement by T. Delcroix \cite{Del20, Del20b} in this case and Remark \ref{rem-spherical}).


We end this introduction with the organization of this paper. In section \ref{sec-model}, we recall the construction of non-Archimedean psh metrics from models. In section \ref{sec-resvol} we recall the concepts related to restricted volumes of big line bundles and positive intersection products, and important results from \cite{BFJ09, ELMNP09} about the relation between them. In section \ref{sec-mov}, we prove Theorem \ref{thm-main}. 
 In the section \ref{sec-conj}, we propose a general conjecture which strengthens the usual Fujita approximation theorem and (in the $\bC^*$-equivariant case) would imply the uniform YTD conjecture for cscK metrics. In the appendix, we verify this algebraic conjecture for some of Nakayama's examples that do not admits birational Zariski decompositions. 
\vskip 1mm
\vskip 1mm
{\bf Acknowledgement:} 
The author is partially supported by NSF (Grant No. DMS-1810867) and an Alfred P. Sloan research fellowship. Part of this paper was written when the author was working at Purdue University. I would like to thank members of algebraic geometry group at Purdue, especially Sai-Kee Yeung, for their interests in this work, and Linquan Ma for helpful discussions. I am grateful to Sebastien Boucksom and Mattias Jonsson for their interest in this work and patient discussions about Conjecture \ref{conj-Fuj}, especially for suggesting ways to approach it and pointing out some delicate difficulty (see Remark \ref{rem-Boucksom}). 


\section{Preliminaries}\label{sec-prel}
\subsection{Non-Archimedean metrics associated to models}\label{sec-model}

This paper is a following-up work of \cite{Li20} and we will mostly follow the notations from that work. 
\begin{defn}\label{def-model}
\begin{itemize}
\item
A model of $(X, L)$ is a flat family of projective varieties $\pi: \mcX\rightarrow \bC$ together with a $\bQ$-line bundle $\mcL$ satisfying:
\begin{enumerate}
\item[(i)] There is a $\bC^*$-action on $(\mcX, \mcL)$ such that $\pi$ is $\bC^*$-equivariant;
\item[(ii)] There is a $\bC^*$-equivariant isomorphism $(\mcX, \mcL)\times_{\bC}\bC^* \cong (X, L)\times\bC^*$.
\end{enumerate}
\item
The trivial model of $(X, L)$ is given by $(X\times\bC, L\times\bC)=:(X_\bC, L_\bC)$.

Two models $(\mcX_i, \mcL_i), i=1,2$ are called equivalent if there exists a model $(\mcX_3, \mcL_3)$ and two $\bC^*$-equivariant birational morphisms $\mu_i: \mcX_3\rightarrow \mcX_i$ such that $\mu_1^*\mcL_1=\mu_2^*\mcL_2$.

\item
If we forget about the data $L$ and $\mcL$, then we say that $\mcX$ is a model of $X$.

If there is a $\bC^*$-equivariant birational morphism $r_{\mcX_1, \mcX_2}: \mcX_1\rightarrow \mcX_2$ for two models $\mcX_i, i=1,2$, then we say that $\mcX_1$ dominates $\mcX_2$ and write $\mcX_1\ge \mcX_2$. If $\mcX\ge X_\bC$, then we say that $\mcX$ is dominating.

If $\mcX$ is normal, we say that $\mcX$ is a normal model. 
We say a model $\mcX$ is a SNC (i.e. simple normal crossing) if $(\mcX, \mcX^\red_0)$ is a simple normal crossing pair. 


\item
Let $(\bar{\mcX}, \bar{\mcL})$ be the canonical $\bC^*$-equivariant compactification of $(\mcX, \mcL)$ over $\bP^1$ by adding the trivial $(X, L)$ at $\infty\in \bP^1$. 

We say that $(\mcX, \mcL)$ is a big model if $\bar{\mcL}$ is a big $\bQ$-line bundle over $\bar{\mcX}$ and the stable base ideal of $m\bar{\mcL}$ is the same as the $\pi$-base ideal of $m\mcL$ for $m\gg 1$. In particular, the stable base locus satisfies $\bfB(\mcL)\subseteq \mcX_0=\pi^{-1}(\{0\})$. (This definition is motivated by \cite[Lemma A.6]{BoJ18b}.)

In the following for simplicity of notations, if there is no confusion, we also just write $(\mcX, \mcL)$ for $(\bar{\mcX}, \bar{\mcL})$.

\item
If $\mcL$ is semiample over $\bC$, then we call the model $(\mcX, \mcL)$ to be a test configuration of $(X, L)$.
\end{itemize}
\end{defn}

\begin{rem}
Rigorously speaking, the model of $(X, L)$ should be called the model of $(X\times\bC, L\times\bC)$. In other words, with the language of \cite{BoJ18a}, we used the base change from the trivially valued case to the discrete valued case. 

In the original literature of K-stability, which we adopt in this paper, the line bundle $\mcL$ is assumed to be semi-ample. For us this is the only difference between the definition of test configurations and models. \end{rem}

We refer to \cite{BFJ15, BoJ18a} for the definition of Berkovich analytification $(X^\NA, L^\NA)$ of $(X, L)$ with respect to the trivially valued field $\bC$ and the definition of non-Archimedean psh metrics $L^\NA$ which are represented by $\phi_\triv$-psh functions on $X^\NA$ (where
$\phi_\triv$ is the metric associated to the trivial test configuration).

For each model $(\mcX, \mcL)$ of $(X, L)$, we can associate a non-Archimedean psh metric $\phi_{(\mcX, \mcL)}$ in the following way.  
If $\fb_m$ denotes the $\pi$-relative base ideal of $m\mcL$ and
$\mu_m: \mcX_m\rightarrow \mcX$ is the normalized blowup of $\fb_m$ with the exceptional divisor denoted by $\tilde{E}_m$, then 
$(\mcX_m, \mcL_m=\mu_m^*\mcL-\frac{1}{m}\tilde{E}_m)$ is a semiample test configuration. $(\mcX_m, \mcL_m)$ defines a smooth non-Archimedean metric $\phi_{(\mcX_m, \mcL_m)}\in \mcH^\NA(L)$ and we set
\begin{equation}
\phi_{(\mcX, \mcL)}=\lim_{m\rightarrow+\infty} \phi_{(\mcX_m, \mcL_m)}.
\end{equation} 
If the base variety $\mcX$ is clear, we just write $\phi_{(\mcX, \mcL)}$ as $\phi_{\mcL}$. It is easy to see that equivalent models define the same non-Archimedean psh metrics. Moreover, if $\mcL$ is semiample, then $\phi_{(\mcX, \mcL)}=\phi_{(\mcX_m, \mcL_m)}$ for $m$ sufficiently divisible.
 
By resolution of singularities, we can assume that $\mcX$ is dominating via a $\bC^*$-equivariant birational morphism $\rho: \mcX\rightarrow X_\bC$. Write $\mcL=\rho^*L+D$ with $D$ supported on $\mcX_0$. Then $\mcL$ defines a model function $f_\mcL$ on $X^{\rm div}_\bQ$ (the set of divisorial valuations on $X$) given by:
\begin{equation}
f_\mcL(v)=G(v)(D), \quad \forall v\in X^{\rm div}_\bQ
\end{equation}
where $G(v): X^{\rm div}_\bQ\rightarrow (X\times\bC)^{\rm div}_\bQ$ is the Gauss extension, i.e. $G(v)$ is a $\bC^*$-invariant valuation on $X\times\bC$ that extends $v$ and satisfies $G(v)(t)=1$. Set $\tilde{\phi}_\mcL=\phi_{\triv}+f_\mcL$. 
 
The $\phi_\triv$-psh upper envelope of $f_\mcL$ is defined as:
\begin{equation}
P(f_\mcL)(v)=\sup\left\{(\phi-\phi_{\triv})(v); \phi\in \PSH^\NA(L), \phi-\phi_\triv\le f_\mcL \right\}.
\end{equation}
By \cite[Theorem 8.5]{BFJ16} we have the identity $\phi_{(\mcX, \mcL)}=\phi_\triv+P(f_\mcL)=:P(\tilde{\phi}_\mcL)$.
Moreover, by \cite[Theorem 8.3]{BFJ16}, $P(f_\mcL)$ is a continuous $\phi_{\triv}$-psh function. 

Because $\bar{\mcL}$ is $\bar{\pi}$-big 
over the compactification $\bar{\mcX}\stackrel{\bar{\pi}}{\rightarrow} \bP^1$, when $c\gg 1$, the $\bQ$-line bundle 
$\bar{\mcL}_c:=\bar{\mcL}+c \mcX_0$ is big over $\bar{\mcX}$. Moreover, by \cite[Lemma A.8]{BFJ15}, when $c\gg 1$,  the $\pi$-relative base ideal of $m \bar{\mcL}$ is the same as the absolute base ideal of $m \bar{\mcL}$ for all $m$ sufficiently divisible. In other words we know that $(\mcX, \mcL_c)$ is a big model in the sense in Definition \ref{def-model}. 
Note that we have $P(\tilde{\phi}_\mcL)+c=P(\tilde{\phi}_{\mcL_c})=\phi_\triv+P(f_{\mcL_c})$.


\subsection{Restricted volumes and positive intersection products}\label{sec-resvol}

In this section, we (change the notation and) assume that $\mcX$ is a compact projective manifold and $\mcL$ is a big line bundle over $\mcX$ of dimensional $n+1$. Recall that the volume of $\mcL$ is defined as:
\begin{equation}\label{eq-vol}
\vol_\mcX(\mcL)=\limsup_{m\rightarrow+\infty} \frac{h^0(\mcX, m\mcL)}{m^{n+1}/(n+1)!}.
\end{equation}
Denote by $N^1(\mcX)={\rm Div}(\mcX)/\equiv\;$ the N\'{e}ron-Severi group. Then the volume functional extends to be a continuous function on $N^1(\mcX)_\bR=N^1(\mcX)\otimes_\bQ \bR$. By Fujita's approximation theorem, this invariant can be calculated as the movable intersection number of $\mcL$ (see \cite{DEL00, Laz04}). In other words, if we let $\mu_m: \mcX_m\rightarrow \mcX$ be the normalized blowup of $\fb(|mL|)$ (or its resolution) with exceptional divisor $\tilde{E}_m$ and set $\mcL_m=\mu_m^*\mcL-\frac{1}{m}\tilde{E}_m$, then
\begin{equation}
\vol_{\mcX}(\mcL)=\lim_{m\rightarrow+\infty}\mcL_m^{n+1}.
\end{equation}
As a consequence, the limsup in \eqref{eq-vol} is indeed a limit. 

Next we recall the notion of restricted volume (\cite{Tsu06, ELMNP09, BFJ09}) and the asymptotic intersection number that calculates the restricted volume. 
\begin{defn}[\cite{ELMNP09}]
For any irreducible ($d$-dimensional) subvariety $Z\subset \mcX$
\begin{itemize} 
\item The restricted volume of $\mcL$ along $Z$ is defined as
\begin{equation}\label{eq-resvol}
\vol_{\mcX|Z}(\mcL)=\limsup_{m\rightarrow+\infty}\frac{\dim_{\bC} {\rm Im}\left(H^0(\mcX, m\mcL)\rightarrow H^0(Z, m\mcL|_Z)\right)}{m^d/d!}.
\end{equation}
\item For any $Z\not\subseteq \bfB(\mcL)$ (the stable base locus of $\mcL$), the asymptotic intersection number of $\mcL$ and $Z$ is defined as:
\begin{equation}\label{eq-asymint}
\|\mcL^d \cdot Z\|:=\limsup_{m\rightarrow+\infty} \mcL_m^d\cdot \tilde{Z}_m,
\end{equation}
where $\tilde{Z}_m$ is the strict transform of $Z$ under the normalized blowup $\mu_m: \mcX_m\rightarrow \mcX$ of base ideal of $|m\mcL|$. 

\end{itemize}
\end{defn}
\begin{rem}
It is shown in \cite{ELMNP09} that the limsup in the formula \eqref{eq-resvol} and \eqref{eq-asymint} are actually  limits.
\end{rem}

Boucksom-Favre-Jonsson \cite{BFJ09} proved that the restricted volume is equal to a positive intersection product. 
\begin{defn}[{\cite[Definition 2.5]{BFJ09}}]\label{def-movint}
Let $\mcL$ be a big $\bQ$-line bundle. For any effective divisor $D$, define:
\begin{equation}\label{eq-movint}
\la \mcL^n\ra \cdot D=\sup_{\mu, E}\; (\mu^*L-E)^n\cdot \mu^*D,
\end{equation} 
where supremum is taken over all birational morphism $\mu: \tilde{\mcX}\rightarrow \mcX$ and an effective divisor $E$ such that $\mu^*L-E$ is nef. If $D=\sum_i b_i D_i$ with $b_i\in \bR$ with $D_i$ effective, then we extend the definition \eqref{eq-movint} linearly:
\begin{equation*}
\la \mcL^n\ra \cdot D=\sum_i b_i \la \mcL^n\ra\cdot D_i.
\end{equation*}

\end{defn}
 \begin{rem}
In \cite{BFJ09}, Boucksom-Favre-Jonsson defined positive intersection product $\la \xi^p\ra$ for any big class $\xi\in N^1(\mcX)_\bR$ and $1\le p\le n+1$, by developing an intersection theory on the Riemann-Zariski space. For example, when $p=n+1$, $\la \xi^{n+1}\ra=\vol(\xi)$; when $p=1$, $\la \xi\ra$ is the collection of positive parts of divisorial Zariski decomposition of $\pi^*\xi$ for all smooth blowups $\pi: \mcX_\pi\rightarrow \mcX$. We refer to \cite{BFJ09} for details on these more general definitions.

Moreover an analytic definition of the positive intersection product 
 was defined even earlier in \cite[Theorem 3.5]{BDPP13} (called movable intersection product there).
For each semipositive class $\alpha\in H^{1,1}(\mcX, \bR)$, define:
\begin{equation}\label{eq-intalp}
\la \mcL^n\ra\cdot \alpha=\sup_{T, \mu}\;\{\beta^n\cdot \mu^*\alpha\}
\end{equation}
where  $T$ ranges over all K\"{a}hler currents in $c_1(\mcL)$ that have logarithmic poles and $\mu: \tilde{\mcX}\rightarrow \mcX$ ranges over the set of those log resolutions satisfying $\mu^*T=\{E\}+\beta$ (with $\{E\}$ an effective divisor and $\beta$ smooth and semipositive).
By Poincar\'{e} duality the class $\la\mcL^n\ra$ is uniquely defined as a semipositive class in $H^{n,n}(\mcX, \bR)$. 
\end{rem}
In the above definitions, we see that the left-hand-side of \eqref{eq-movint} depends only on the numerical class of $\mcL$ and $D$.

Recall that the augmented base locus of $\mcL$ is defined as (see \cite{ELMNP09}):
\begin{equation}
\bfB_+(\mcL)=\bigcap_{\mcL=A+E} \supp(E),
\end{equation}
where the intersection is over all decompositions of $\mcL=A+E$ into $\bQ$-divisors with $A$ ample and $E$ effective. It is know that the augmented base locus depends only on the numerical class of $\mcL$ (see \cite{ELMNP09} and reference therein). 
We will use the following important results:
\begin{thm}\label{thm-restvol}
If $\mcL\rightarrow \mcX$ is a big line bundle, and $Z\subset X$ is a prime divisor, then the following statements are true: 
\begin{enumerate}
\item (\cite[Theorem 2.13, Theorem C]{ELMNP09}) If $Z\not\subseteq \bfB_+(\mcL)$ then $\vol_{\mcX|Z}(\mcL)=\|\mcL^n\cdot Z\|$. If $Z\subseteq \bfB_+(\mcL)$, then $\vol_{\mcX|Z}(\mcL)=0$. 

\item (\cite[Theorem B]{BFJ09}) There is an identity $\vol_{\mcX|Z}(\mcL)=\la \mcL^{n}\ra \cdot Z$. As a consequence, $\vol_{\mcX|Z}(\mcL)$ depends only on the numerical class of $\mcL$ and $Z$.
\end{enumerate}

\end{thm}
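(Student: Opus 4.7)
The plan is to reduce both statements to Fujita-type approximations along the resolutions $\mu_m: \mcX_m \to \mcX$ of the base ideals $\fb(|m\mcL|)$, which provide decompositions $\mu_m^*\mcL = \mcL_m + \frac{1}{m}\tilde{E}_m$ with $\mcL_m$ semi-ample and $\tilde{E}_m$ effective. Throughout, let $\tilde{Z}_m$ denote the strict transform of $Z$ under $\mu_m$.

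For Part (1), first suppose $Z \not\subseteq \bfB_+(\mcL)$. I would identify the image of the restriction map $H^0(\mcX, m\mcL) \to H^0(Z, m\mcL|_Z)$ with the image of $H^0(\mcX_m, m\mcL_m) \to H^0(\tilde{Z}_m, m\mcL_m|_{\tilde{Z}_m})$, up to twisting by $\tilde{E}_m$-supported sections. The hypothesis $Z \not\subseteq \bfB_+(\mcL)$ forces $\tilde{Z}_m \not\subseteq \supp(\tilde{E}_m)$ for $m$ large, so these exceptional sections contribute nothing after restriction. Since $\mcL_m$ is semi-ample, asymptotic Riemann--Roch on $\tilde{Z}_m$ gives $\dim(\mathrm{Im})/(m^n/n!) \to \mcL_m^n \cdot \tilde{Z}_m$, and passing to the limit in $m$ recovers $\|\mcL^n \cdot Z\|$. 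When $Z \subseteq \bfB_+(\mcL)$, one instead chooses a decomposition $\mcL \equiv A + E$ with $A$ ample, $E$ effective, and $Z \subseteq \supp(E)$; then every section of $m\mcL$ vanishes along $Z$ to order at least $m\cdot \mathrm{mult}_Z(E)$, which forces the restricted volume to vanish.

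For Part (2), I would argue that the positive intersection product is computed by the Fujita sequence. By the projection formula, $(\mu_m^*\mcL - \frac{1}{m}\tilde{E}_m)^n \cdot \mu_m^*Z = \mcL_m^n \cdot \tilde{Z}_m$, so each Fujita approximation is a legitimate competitor in the supremum of Definition \ref{def-movint}, yielding $\la\mcL^n\ra \cdot Z \ge \|\mcL^n \cdot Z\|$. For the reverse inequality, given any competitor $(\mu, E)$ with $\mu^*\mcL - E$ nef, one passes to a common model dominating both $\mu$ and $\mu_m$, and uses that $\mu_m^*\mcL - \frac{1}{m}\tilde{E}_m$ is the largest movable part in the sense that any nef class bounded by $\mu^*\mcL$ is dominated by it modulo exceptional divisors; combined with the fact that $Z$ is a prime divisor (hence its pullback is effective), the intersection number is majorized by $\mcL_m^n \cdot \tilde{Z}_m$ in the limit. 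Part (1) then identifies $\la\mcL^n\ra \cdot Z$ with $\vol_{\mcX|Z}(\mcL)$, and the numerical invariance is immediate from the left-hand side.

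The principal obstacle is the upper bound in Part (2): passing from an arbitrary nef Fujita decomposition to the canonical sequence requires a genuine limit argument, handled in \cite{BFJ09} via the Riemann--Zariski space, where $\la\mcL^n\ra$ is realized as a limit in a compactified numerical cone, or analytically in \cite{BDPP13} via monotonicity of the Siu decomposition of positive currents with minimal singularities. Once this monotonicity is in place, the equalities assemble as described, but pinning down the majorization requires the diagonal argument that is precisely what distinguishes the positive intersection product from a naive $\limsup$.
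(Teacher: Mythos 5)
The paper does not prove this theorem---it quotes it from \cite{ELMNP09} (Part 1) and \cite{BFJ09} (Part 2)---so there is no in-paper argument to compare against; I evaluate your sketch on its own terms. The serious error is in the second half of Part (1): from a decomposition $\mcL = A + E$ with $A$ ample, $E$ effective and $Z \subseteq \supp(E)$, you conclude that ``every section of $m\mcL$ vanishes along $Z$ to order at least $m\cdot\mathrm{mult}_Z(E)$.'' This is false. A general section of $mA+mE$ need not vanish on $\supp(E)$ at all: take $\mcL=\cO_{\bP^2}(2)$, $A=\cO_{\bP^2}(1)$, $E$ a line; $|m\mcL|$ is base-point-free, so its general member misses $Z=E$ entirely. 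What is true is only that the subspace $H^0(mA)\cdot s_E^{\,m}$ vanishes to that order, and it is a proper subspace of $H^0(m\mcL)$. The actual argument in \cite{ELMNP09} that $Z\subseteq\bfB_+(\mcL)$ forces $\vol_{\mcX|Z}(\mcL)=0$ goes through the identification $\bfB_+(\mcL)=\bfB(\mcL-\epsilon A)$ for small rational $\epsilon$ and a uniform vanishing estimate; there is no shortcut via a single fixed decomposition.

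Two further gaps deserve mention. In the first half of Part (1), the asymptotic count of $\dim\mathrm{Im}$ for a fixed semi-ample $\mcL_m$ is not a single application of Riemann--Roch: in your sketch the blowup index and the twist degree are both called $m$, but the genuine argument is a diagonal limit (twist degree $\to\infty$ for fixed blowup $\mu_m$, then $m\to\infty$), and the asymptotic surjectivity of the restriction map uses the Iitaka fibration of $\mcL_m$ together with Serre vanishing downstairs. In Part (2), the assertion that $\mu_m^*\mcL-\frac{1}{m}\tilde E_m$ is ``the largest movable part'' dominating any nef competitor $\mu^*\mcL-E$ modulo exceptional divisors is not a true statement; the Fujita decompositions enjoy no such extremality among nef sub-classes on all higher models. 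The upper bound $\la\mcL^n\ra\cdot Z\le\vol_{\mcX|Z}(\mcL)$ in \cite{BFJ09} is obtained instead by differentiating the volume function: one shows $\vol$ is $C^1$ on the big cone with $\frac{d}{dt}\big|_{t=0}\vol(\mcL+tZ)=(n+1)\,\la\mcL^n\ra\cdot Z$, and then identifies this derivative with the restricted volume via an exact-sequence count. You correctly flag the upper bound as the hard direction, but the proposed majorization mechanism does not exist.
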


As a consequence of these results, we know that in the definition of positive intersection number in \eqref{eq-movint}, it suffices to take the supremum along the sequence 
$\mu_m: \mcX_m\rightarrow \mcX$ which is the normalized blowup of $\fb(|mL|)$ (or its resolution) with exceptional divisor $\tilde{E}_m$. In other words, if we set $\mcL_m=\mu_m^*\mcL-\frac{1}{m}\tilde{E}_m$, then for any divisor $D$, we have the identity:
\begin{equation}
\la \mcL^n\ra\cdot D=\lim_{m\rightarrow+\infty}\mcL_m^n\cdot \mu^*D.
\end{equation}

\section{Positive intersection formula}\label{sec-mov}


Let $\pi: (\mcX, \mcL)\rightarrow \bC$ be a {\it big} model of $(X, L)$. By resolution of singularities, we can assume that $(\mcX, \mcX^{\rm red}_0)$ is a dominating and SNC model of $(X, L)$.  
From now on, for simplicity of notation, we still denote by $(\mcX, \mcL)$ its natural compactification over $\bP^1$.

Because $\mcL$ is big over $\mcX(=\bar{\mcX})$ (by the definition of big model), $\bfB_+(\mcL)\neq \mcX$, there exists a fiber $\mcX_t=\pi^{-1}(\{t\})$ for some $t\in \bP^1\setminus \{0\}$ such that $\mcX_t\not\subseteq \bfB_+(\mcL)$. In particular, $\mcX_t\not\subseteq \bfB(\mcL)$. 
We then apply Theorem \ref{thm-restvol} to get 
\begin{equation}\label{eq-volXt}
\la \mcL^n\ra \cdot \mcX_t=\vol_{\mcX|X}(\mcL)=\|\mcL^n\cdot X\|=V.
\end{equation}

Because $\la \mcL^n\ra \cdot \mcX_t$ depends only on numerical classes of $\mcL$ and $\mcX_t$ (see definition \ref{def-movint}), we can use $\mcX_t\equiv \mcX_0=\sum_{i=1}^I b_i E_i$ to get:
\begin{equation}\label{eq-totalV}
V=\la \mcL^n\ra \cdot \mcX_t=\sum_{i=1}^I b_i  \left(\la \mcL^n\ra \cdot E_i\right).
\end{equation}
Now we can prove the formula \eqref{eq-MAmodel} for the non-Archimedean Monge-Amp\`{e}re measure of non-Archimedean metrics associated to model filtrations. This result refines and generalizes \cite[Lemma 8.5]{BFJ15}.
\begin{proof}[Proof of Theorem \ref{thm-main}.(i)]
We will use the notations in section \ref{sec-model}. 
Via the resolution of singularity, we can first replace $\mcX$ by any SNC model $\mcX'$ that dominates $\mcX$ via $\pi': \mcX'\rightarrow \mcX$ and replace $\mcL$ by $\mcL'=\pi'^*\mcL$. For simplicity of notations, we will still use the notation $(\mcX, \mcL)$ instead of $(\mcX', \mcL')$.

Because the sequence of continuous metrics $\phi_m:=\phi_{(\mcX_m, \mcL_m)}\in \mcH^\NA$ increases to the continuous metric $\phi_{\mcL}=\phi_\triv+P(f_\mcL)$, by Dini's theorem we know that $\phi_{m}$ converges to $\phi_\mcL$ uniformly. In particular, $\phi_m$ converges to $\phi_\mcL$ in the strong topology and $\MA^\NA(\phi_m)$ converges strongly, and hence also weakly, to $\MA^\NA(\phi_\mcL)$.

Set $\nu_{\mcX, m}=(r_{\mcX})_*(\MA^\NA(\phi_m))$ and $\nu_{\mcX}=(r_{\mcX})_*\MA^\NA(\phi_\mcL)$ where $r_{\mcX}: X\rightarrow \Delta_{\mcX}$ is the natural retraction to the dual complex of $\mcX$ (see \cite{BoJ18a}). Then they are supported on $\Delta_\mcX$ and it is easy to see that $\nu_{\mcX,m}$ converges to $\nu_{\mcX}$ weakly. 
By Portmanteau's theorem for weak convergence of measures (see \cite[Theorem 2.1]{Bil99}), we have:
\begin{equation}\label{eq-Port1}
\limsup_{m\rightarrow+\infty} \nu_{\mcX, m}(\{x_i\})\le \nu_{\mcX}(\{x_i\}).
\end{equation}
On the other hand, we clearly have 
\begin{eqnarray*}
\nu_{\mcX,m}(\{x_i\})&=&(r_{\mcX})_*\MA^\NA(\phi_m)(\{x_i\})\\
&=&\MA^\NA(\phi_m)((r_{\mcX})^{-1}\{x_i\})\ge \MA^\NA(\phi_m)(\{x_i\}).
\end{eqnarray*}
So we combine the above two inequalities to get:
\begin{equation}\label{eq-Port}
\limsup_{m\rightarrow+\infty} \MA^\NA(\phi_m)(\{x_i\})\le \nu_{\mcX}(\{x_i\})=:V_i.
\end{equation}
 We consider two cases:
\begin{enumerate}
\item If $E_i\not\subseteq \bfB_+(\mcL)$, then $E_i$ is not contained in $\bfB(\mcL)$. 
By the formula of non-Archimedean Monge-Amp\`{e}re measures of test configurations (see \cite[section 3.4]{BoJ18a}) 
we get that:
\begin{equation}\label{eq-auxineq}
b_i \mcL_m^n\cdot \tilde{E}_i=
\MA^\NA(\phi_{m})(\{x_i\}),
\end{equation}
where $\tilde{E}_i$ is the strict transform of $E_i$ under $\mu_m$.
So by \eqref{eq-auxineq} and \eqref{eq-asymint} we get
\begin{equation}
\limsup_{m\rightarrow+\infty} \MA^\NA(\phi_m)(\{x_i\})=\limsup_{m\rightarrow+\infty} b_i \mcL_m^n\cdot \tilde{E}_i=b_i \|\mcL^n\cdot E_i\|.
\end{equation}
So by Theorem \ref{thm-restvol} and the inequality \eqref{eq-Port} we have 
\begin{equation}
b_i \la \mcL^n\ra\cdot E_i=b_i\cdot \vol_{\mcX|E_i}(\mcL)=
b_i \|\mcL^n\cdot E_i\|\le V_i.
\end{equation}

\item If $E_i\subseteq \bfB_+(\mcL)$, then $b_i \la \mcL^n\ra \cdot E_i=0\le V_i$.
\end{enumerate}
Combining these with \eqref{eq-totalV}, we have:
\begin{eqnarray*}
V&=&\sum_i b_i \la \mcL^n\ra \cdot E_i\le \sum_i V_i=\sum_i \nu_{\mcX}(\{x_i\}) \le V.
\end{eqnarray*}
So the inequalities in the above chain are actually equalities. So $b_i \la \mcL^n\ra \cdot E_i=V_i=\nu_{\mcX}(\{x_i\})$ for $i=1, \dots, I$ and $\nu_{\mcX}=(r_{\mcX})_*\MA^\NA(\phi_\mcL)$ is supported on the finite set $\{x_i; i=1,\dots, I\}$. In other words, we have
\begin{equation}
(r_{\mcX})_*\MA^\NA(\phi_\mcL)=\sum_{i=1}^N b_i \la\left(\mcL^n\ra\cdot E_i \right) \delta_{x_i}.
\end{equation}
But we have said that $(\mcX, \mcL)$ can be replaced by any SNC model that dominates $\mcX$. Moreover, the pairs $\{(x_i, V_i); V_i\neq 0\}$ do not depend on the choice of such SNC models. By using the homemorphism $X^\NA=\varprojlim \Delta_\mcX$, it is then easy to conclude that the Radon measure $\MA^\NA(\phi_\mcL)$ is indeed only supported on the finite set $\{x_i; i=1,\dots, I\}$ and the identity \eqref{eq-MAmodel} holds true.
\end{proof}
\begin{rem}
Although our work on K-stability is the through the study of non-Archimedean geometry in the trivially valued case (which is base-changed to the discretely valued case, following \cite{BoJ18a, BoJ18b}), 
 the proof of formula for non-Archimedean Monge-Amp\`{e}re measure also holds true for more general non-trivially valued case.
\end{rem}

We recall that the formula for non-Archimedean functionals following the works in \cite{BHJ17, BoJ18a, BoJ18b} (see also \cite{Li20}). For any continuous continuous psh metric $\phi$ on $L^\NA$, the non-Archimedean Mabuchi functional is given by:
\begin{equation}\label{eq-MNA}
\bfM^\NA(\phi)=\bfH^\NA(\phi)+(\bfE^{K_X})^\NA(\phi)+\ud{S}\; \bfE^\NA(\phi)
\end{equation}
where the terms on the right-hand-side are given by the following non-Archimedean integrals:
\begin{eqnarray*}
\bfH^\NA(\phi)&=&\int_{X^\NA}A_X(x)\MA^\NA(\phi), \\
(\bfE^{K_X})^\NA(\phi)&=& \sum_{i=0}^{n-1} \int_{X^\NA} (\phi-\phi_\triv) \ddc\psi\wedge \MA^\NA(\phi_\triv^{[i]}, \phi^{[n-1-i]})\\
\bfE^\NA(\phi)&=&\frac{1}{n+1}\sum_{i=0}^{n+1}\int_{X^\NA} (\phi-\phi_\triv) \MA^\NA(\phi_\triv^{[i]}, \phi^{[n-i]}),
\end{eqnarray*}
where in the second identity $\psi$ is a Hermitian metric on $K_X^{\NA}$. We also recall the $\bfJ^\NA$-functional:
\begin{equation}\label{eq-JNA}
\bfJ^\NA(\phi)=L^n\cdot \sup(\phi-\phi_\triv)-\bfE^\NA(\phi).
\end{equation} 
\begin{prop}
With the above notation, we have:
\begin{equation}\label{eq-Hmodel}
\bfH^\NA(\phi_\mcL)=\la \mcL^n\ra\cdot K^{\log}_{\mcX/X_{\bP^1}}.
\end{equation}
\end{prop}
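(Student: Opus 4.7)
The proof will combine the Monge--Amp\`ere formula \eqref{eq-MAmodel} with the Gauss extension identity for log discrepancies. As in the proof of Theorem \ref{thm-main}.(i) I may pass to any SNC dominating model by pullback: both sides of \eqref{eq-Hmodel} are invariant under such refinement, since $\phi_\mcL$ depends only on the equivalence class of $(\mcX,\mcL)$, and for the right hand side the projection formula together with the vanishing $\la\mu^*\mcL^n\ra\cdot F=0$ for $\mu$-exceptional prime divisors $F$ (which lie inside $\bfB_+(\mu^*\mcL)$ and hence have zero restricted volume by Theorem \ref{thm-restvol}) absorbs the difference between $K^{\log}_{\mcY/X_{\bP^1}}$ and $\mu^*K^{\log}_{\mcX/X_{\bP^1}}$. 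So I assume $\mcX$ is SNC and dominates $X_{\bP^1}$ via a birational morphism $p:\mcX\to X_{\bP^1}$ that is an isomorphism off $\mcX_0$.

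Feeding \eqref{eq-MAmodel} into $\bfH^\NA(\phi_\mcL)=\int_{X^\NA}A_X\,\MA^\NA(\phi_\mcL)$ gives
\[
\bfH^\NA(\phi_\mcL) \;=\; \sum_{i=1}^I b_i A_X(x_i)\bigl(\la\mcL^n\ra\cdot E_i\bigr).
\]
Now I invoke the Gauss extension formula of \cite{BHJ17}: for any divisorial valuation $v$ on $X$, its Gauss extension $G(v)$ on $X\times\bA^1$ (characterized by $G(v)|_{K(X)}=v$ and $G(v)(t)=1$) satisfies $A_{X\times\bA^1}(G(v))=A_X(v)+1$. Applied to $v=b_i^{-1}\ord_{E_i}|_{K(X)}$, whose Gauss extension is $b_i^{-1}\ord_{E_i}$ (because $\ord_{E_i}(t)=b_i$), and using the homogeneity $A(cv)=cA(v)$, this rearranges to
\[
b_i A_X(x_i) \;=\; A_{X_{\bP^1}}(\ord_{E_i})-b_i.
\]

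Finally I identify the right hand side as the coefficient of $E_i$ in the log relative canonical. Writing $K^{\log}_{\mcX/X_{\bP^1}}=K_{\mcX/X_{\bP^1}}+\mcX_0^{\red}-p^*(X\times\{0\})$ (the $\infty$-fibre contributions cancel since $p$ is an isomorphism there), and using that $p$ is an isomorphism away from $\mcX_0$, this divisor is supported on $\mcX_0=\sum_i b_i E_i$; the standard identity $\ord_{E_i}(K_{\mcX/X_{\bP^1}})=A_{X_{\bP^1}}(\ord_{E_i})-1$ then gives
\[
\ord_{E_i}\bigl(K^{\log}_{\mcX/X_{\bP^1}}\bigr) \;=\; A_{X_{\bP^1}}(\ord_{E_i})-b_i \;=\; b_i A_X(x_i).
\]
Linearity of $\la\mcL^n\ra\cdot(-)$ in its divisor argument (Definition \ref{def-movint}) then assembles the two sides into the claimed identity \eqref{eq-Hmodel}. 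The main point requiring care is the model-invariance argument in the opening paragraph, together with the careful bookkeeping of the $\log$-vs-non-$\log$ correction $\mcX_0^{\red}-\mcX_0$ so that the arithmetic of the log discrepancies matches the arithmetic of the canonical; the Gauss extension identity itself is standard, and once both are in hand the rest is a formal computation using Theorem \ref{thm-main}.(i).
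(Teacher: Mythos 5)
Your proof is correct and follows essentially the same route as the paper's: compute the coefficients of $E_i$ in $K^{\log}_{\mcX/X_{\bP^1}}$ as $b_i A_X(x_i)$ via the Gauss extension identity $A_{X_{\bP^1}}(G(v))=A_X(v)+1$, and then apply the Monge--Amp\`ere formula \eqref{eq-MAmodel}. The opening paragraph on model-invariance (using $\la\mu^*\mcL^n\ra\cdot F=0$ for $\mu$-exceptional $F$ via $F\subseteq\bfB_+(\mu^*\mcL)$ and Theorem \ref{thm-restvol}) is sound and makes explicit a reduction the paper handles at the start of Section \ref{sec-mov}, but it does not change the substance of the argument.
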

\begin{proof}
Note that we have the identity:
\begin{eqnarray*}
K^{\log}_{\mcX/X_{\bP^1}}&=&K_{\mcX}+\mcX^{\rm red}_0-(K_{X_{\bP^1}}+\mcX_0)=\sum_i (A_{X_{\bP^1}}(E_i)-b_i)E_i\\
&=&\sum_i b_i (A_{X_{\bP^1}}(b_i^{-1}\ord_{E_i})-1)E_i=\sum_i b_i A_X(x_i)E_i.
\end{eqnarray*}
So we can use \eqref{eq-MAmodel} to get the identity:
\begin{eqnarray*}
\bfH^\NA(\phi_\mcL)&=&\int_{X^\NA} A_X(x) \MA^\NA(\phi_\mcL)=\sum_i A_X(x_i) b_i \la \mcL^n\ra \cdot E_i\\
&=&\la \mcL^n\ra \cdot K^{\log}_{\mcX/X_{\bP^1}}.
\end{eqnarray*}
\end{proof}

\begin{prop}\label{prop-REphi}
With the above notation, we have the following identities:
\begin{eqnarray}
\bfR^\NA(\phi_\mcL)&=&\la \mcL^n\ra \cdot \rho^*K_X, \label{eq-RNAPphi}\\
\bfE^\NA(\phi_\mcL)&=&\frac{1}{n+1}\la \mcL^{n+1}\ra=\frac{1}{n+1}\la \mcL^n\ra \cdot \mcL. \label{eq-ENAPphi}
\end{eqnarray}
\end{prop}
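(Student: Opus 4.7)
The plan is to mimic the strategy that already worked for $\bfH^\NA(\phi_\mcL)$ in \eqref{eq-Hmodel}: approximate $\phi_\mcL$ by the semiample metrics $\phi_m:=\phi_{(\mcX_m,\mcL_m)}$ coming from the normalized blow-ups $\mu_m:\mcX_m\to \mcX$ of $\fb_m$, compute both functionals on the test-configuration side as honest intersection numbers, and then pass to the limit. As in the proof of Theorem \ref{thm-main}(i), the sequence $\phi_m$ increases to the continuous metric $\phi_\mcL$, so by Dini's theorem the convergence is uniform, hence strong; this is exactly the regime in which $\bfE^\NA$ and $\bfR^\NA$ are continuous (in the case of $\bfE^\NA$, continuity along monotone sequences follows from its definition as a sum of Monge--Amp\`ere integrals, while for $\bfR^\NA$ one uses the integration by parts identity used in \cite{BoJ18a} to rewrite it with bounded Lipschitz data).

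First I would handle $\bfE^\NA$. For the semiample model $(\mcX_m,\mcL_m)$ the formula
\begin{equation*}
\bfE^\NA(\phi_m)=\frac{1}{n+1}\bar{\mcL}_m^{n+1}
\end{equation*}
is standard (see \cite[Prop.~3.12]{BHJ17} in the test configuration case, or \cite[section 3.4]{BoJ18a}). Expanding $\bar{\mcL}_m=\mu_m^*\bar{\mcL}-\frac{1}{m}\tilde E_m$ gives
\begin{equation*}
\bar{\mcL}_m^{n+1}=\bar{\mcL}_m^n\cdot \mu_m^*\bar{\mcL}-\frac{1}{m}\bar{\mcL}_m^n\cdot \tilde E_m.
\end{equation*}
The second term is $O(1/m)$ because $\bar{\mcL}_m^n\cdot \tilde E_m$ is bounded uniformly in $m$ by the restricted-volume theorem of \cite{ELMNP09} applied to the exceptional components. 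Sending $m\to\infty$ and invoking the intersection-theoretic definition \eqref{eq-movint} of the positive intersection product gives $\lim_m\bar{\mcL}_m^n\cdot \mu_m^*\bar{\mcL}=\la\bar{\mcL}^n\ra\cdot\bar{\mcL}$, which simultaneously equals $\vol(\bar{\mcL})=\la\bar{\mcL}^{n+1}\ra$ by Fujita approximation. Combining yields \eqref{eq-ENAPphi}.

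For $\bfR^\NA$ the argument is formally the same. Writing out the definition of $\bfR^\NA$ for a semiample model $(\mcX_m,\mcL_m)$ produces
\begin{equation*}
\bfR^\NA(\phi_m)=\bar{\mcL}_m^n\cdot \mu_m^*\rho^*K_X,
\end{equation*}
since on a test configuration the mixed Monge--Amp\`ere integral against the canonical class reduces to an intersection number (this is again \cite[Prop.~3.12]{BHJ17}; the decomposition of $K_{\mcX/\bP^1}^{\log}$ used in \eqref{eq-Hmodel} is the $\ddc$-$\log$ counterpart). Passing to the limit and using \eqref{eq-movint} identifies the right-hand side with $\la\bar{\mcL}^n\ra\cdot\rho^*K_X$, giving \eqref{eq-RNAPphi}.

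The only genuinely delicate point is continuity of these non-Archimedean energies under the uniform convergence $\phi_m\to \phi_\mcL$ when $\phi_\mcL$ is the psh envelope of a model function coming from a \emph{big} (not semiample) $\mcL$: one needs that the Monge--Amp\`ere mass does not escape in the limit. For $\bfE^\NA$ this is handled exactly as in the proof of Theorem~\ref{thm-main}(i) via the strong topology; for $\bfR^\NA$ one reduces, after integration by parts against a Hermitian metric $\psi$ on $K_X^\NA$, to the same statement. Once continuity is granted, the identities drop out of Fujita approximation and the definition of the positive intersection product.
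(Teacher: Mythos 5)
Your overall strategy (approximate $\phi_\mcL$ by the semiample metrics $\phi_m$, compute on the test-configuration side, pass to the limit) is the same as the paper's, but there are two genuine gaps in the execution.

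For $\bfE^\NA$, your claim that the cross term $\frac{1}{m}\bar{\mcL}_m^n\cdot\tilde E_m$ is $O(1/m)$ because $\bar{\mcL}_m^n\cdot\tilde E_m$ is uniformly bounded is not correct. The exceptional divisor $\tilde E_m$ scales roughly linearly in $m$ (it is the base ideal of $|m\mcL|$), so $\bar{\mcL}_m^n\cdot\tilde E_m$ is a priori of order $m$, not $O(1)$, and the restricted-volume theorem does not give you boundedness of this quantity. What is actually true, and what you need, is precisely the orthogonality estimate of Boucksom--Demailly--P\u{a}un--Peternell (Theorem \ref{thm-BDPP}, or \cite[Cor.~4.5]{BDPP13}, \cite[Cor.~3.6]{BFJ09}): $\bigl(\mcL_m^n\cdot\frac{1}{m}\tilde E_m\bigr)^2\le C\,(\vol(\mcL)-\vol(\mcL_m))\to 0$. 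This does not give a rate of $O(1/m)$; it just gives vanishing in the limit. The paper bypasses your whole expansion anyway: it applies Fujita approximation directly to get $\lim_m\bar{\mcL}_m^{n+1}=\vol(\bar{\mcL})=\la\bar{\mcL}^{n+1}\ra$, and then invokes orthogonality for the \emph{second} equality $\la\bar{\mcL}^{n+1}\ra=\la\bar{\mcL}^n\ra\cdot\bar{\mcL}$ in \eqref{eq-ENAPphi}. In your version the orthogonality is where the real content is, and you omit it.

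For $\bfR^\NA$, writing $\bfR^\NA(\phi_m)=\bar{\mcL}_m^n\cdot\mu_m^*\rho^*K_X$ is correct, but "passing to the limit using the definition \eqref{eq-movint}" hides the actual issue: the definition only gives $\mcL_m^n\cdot\mu_m^*D\le\la\mcL^n\ra\cdot D$, and even when $D$ is prime and not in $\bfB_+(\mcL)$ the Fujita-type identity \eqref{eq-asymint} is stated for \emph{strict} transforms, which need not agree with $\mu_m^*D$. The paper handles this by writing $\rho^*K_X=A_1-A_2$ with $A_1,A_2$ very ample and chosen generically so that they miss the centers of the Rees valuations of every $\fb_m$; then the strict and total transforms coincide and Theorem \ref{thm-restvol} applies verbatim. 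Your proof should include this reduction (or a substitute for it) rather than appealing to "the definition."
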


\begin{proof}
Because $\phi_m$ converges to $\phi_\mcL$ strongly, by \cite{BoJ18a} 
we have:
\begin{equation}\label{eq-RNAm}
\bfR^\NA(\phi_\mcL)=\lim_{m\rightarrow+\infty} \bfR^\NA(\phi_m)=\lim_{m\rightarrow+\infty} \mcL_m^n\cdot \mu_m^*\rho^* K_X.
\end{equation}
Write $\rho^*K_X=A_1-A_2$ with $A_1, A_2$ very ample. Moreover we can choose $A_i, i=1,2$  to be sufficient general such that $A_i, i=1, 2$ do not contain the centers of Rees valuations of $\fb_m$ for all $m$. Then the strict transforms of $A_i, i=1, 2$ under $\mu_m: \mcX_m\rightarrow \mcX$ are the same as the total transform of $A_i, i=1,2$. By using Theorem \ref{thm-restvol} we see that the right-hand-side of \eqref{eq-RNAm} is equal to  
\begin{equation}
\|\mcL^n\cdot A_1\|-\|\mcL^n\cdot A_2\|=\la \mcL^n\ra \cdot (A_1-A_2)=\la \mcL^n \ra\cdot \rho^*K_X.
\end{equation}
For the first equality in \eqref{eq-ENAPphi}, we can again use $\phi_m=\phi_{(\mcX_m, \mcL_m)}$ (for which \eqref{eq-ENAPphi} is known to be true) to approximate and directly apply the Fujita approximation result in \cite[Theorem 11.4.11]{Laz04}. The last equality in \eqref{eq-ENAPphi} follows from the orthogonality property proved in
\cite[Corollary 4.5]{BDPP13} or \cite[Corollary 3.6]{BFJ09}.

\end{proof}

We can complete the proof the formula for the non-Archimedean Mabuchi functional.
\begin{proof}[Proof of Theorem \ref{thm-main}.(ii)] 
The formula \eqref{eq-Mmodel} follows immediately from the decomposition $\bfM^\NA=\bfH^\NA+\bfR^\NA+\ud{S}\bfE^\NA$  and the formula for each part in \eqref{eq-Hmodel}, \eqref{eq-RNAPphi} and \eqref{eq-ENAPphi}.

\end{proof}

As an application of the positive intersection formula, we get:
\begin{prop}\label{prop-reduced}
To check the ($G$-) unform K-stability for models (see Definition \ref{def-uniK} and \cite{Li20}), it suffices to consider models with reduced central fibres.
\end{prop}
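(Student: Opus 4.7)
The plan is to use a degree-$d$ base change in the $\bP^1$-direction to pass from a general big model to one with reduced central fibre, then transport the inequality across this base change via the intersection formula \eqref{eq-Mmodel}.

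Given a big model $(\mcX, \mcL)$ with central fibre $\mcX_0 = \sum_{i=1}^I b_i E_i$, I would choose a positive integer $d$ divisible by every $b_i$ (for instance $d = \mathrm{lcm}(b_1,\dots,b_I)$) and form the normalized base change $p: \bar{\mcX}^{(d)} \to \bar{\mcX}$ induced by the degree-$d$ cover $\nu_d: \bP^1 \to \bP^1$, $z \mapsto z^d$, equipped with $\bar{\mcL}^{(d)} := p^*\bar{\mcL}$. A local computation around each $E_i$, using that $b_i \mid d$, shows that after normalization the preimage of $E_i$ breaks into $b_i$ reduced components in the new central fibre, so $(\mcX^{(d)}, \mcL^{(d)})$ has reduced central fibre. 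Bigness of $\bar{\mcL}^{(d)}$ and the stable base ideal condition in Definition~\ref{def-model} are preserved by the finite pullback $p$, so $(\mcX^{(d)}, \mcL^{(d)})$ is again a big model.

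Next I would establish the scaling identities
\[
\bfM^\NA(\mcX^{(d)}, \mcL^{(d)}) = d \cdot \bfM^\NA(\mcX, \mcL), \qquad \bfJ^\NA(\mcX^{(d)}, \mcL^{(d)}) = d \cdot \bfJ^\NA(\mcX, \mcL).
\]
Two ingredients feed into this. First, one has the projection-formula identities $\la (p^*\bar{\mcL})^n\ra \cdot p^*D = d\,\la \bar{\mcL}^n\ra \cdot D$ for any divisor $D$ on $\bar\mcX$, and $\la (p^*\bar{\mcL})^{n+1}\ra = d\,\la \bar{\mcL}^{n+1}\ra$, which I would deduce by choosing Fujita approximations of $\bar{\mcL}$ whose $\bar\pi$-base ideals pull back to the analogous approximations on $\bar{\mcX}^{(d)}$ and then applying the classical projection formula. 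Second, one has the log-crepant identity $K^{\log}_{\bar{\mcX}^{(d)}/\bP^1} = p^*K^{\log}_{\bar{\mcX}/\bP^1}$, which comes from the ramification of $p$ along $E_i$ (of index $b_i$) cancelling exactly against the change from multiplicity $b_i$ to multiplicity $1$ in the reduced central fibre. Plugging these into \eqref{eq-Mmodel} and \eqref{eq-ENAPphi}, together with the scaling $\sup(\phi_{\mcL^{(d)}} - \phi_\triv) = d \cdot \sup(\phi_\mcL - \phi_\triv)$ (valuations on $X$ rescale by $d$ under a degree-$d$ base change) for the $L^n\cdot\sup$ term of $\bfJ^\NA$, yields both scalings.

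Combining these with the assumed inequality $\bfM^\NA \geq \gamma\,\bfJ^\NA$ for big models with reduced central fibre, applied to $(\mcX^{(d)}, \mcL^{(d)})$, and dividing through by $d$, returns the inequality for the arbitrary model $(\mcX, \mcL)$. The $G$-uniform statement follows identically because $G$ acts trivially on $\bP^1$, so the entire base-change construction is $G$-equivariant. The main obstacle I anticipate is the rigorous justification of the log-crepant identity and of the compatibility of positive intersection products with the finite cover $p$ in the genuinely big (non-semiample) setting: one has to verify that the birational modifications appearing in Definition~\ref{def-movint} can be chosen compatibly on $\bar{\mcX}$ and $\bar{\mcX}^{(d)}$, which reduces to a compatibility between the Rees valuations of the $\bar\pi$-base ideals on the two sides.
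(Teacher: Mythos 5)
Your overall strategy coincides with the paper's: take the base change $t\mapsto t^d$, normalize to obtain a model $\tilde{\mcX}$ with reduced central fibre, use the log-crepant identity $K^{\log}_{(\tilde{\mcX},\tilde{\mcX}_0)} = f^*K^{\log}_{(\mcX,\mcX_0)}$, scale the positive intersection quantities by $d$, and scale $\bfJ^\NA$ by $d$ via the transformation of $\phi$ under base change. The place where you anticipate the main difficulty --- compatibility of positive intersection products across the finite cover --- is also precisely where the paper takes a slicker route. Instead of attempting to choose Fujita approximations whose base ideals pull back compatibly (which is awkward, since Rees valuations need not be stable under finite covers in any obvious way), the paper invokes the known fact that volumes of big line bundles are multiplicative under generically finite morphisms (\cite[Lemma 4.3]{Hol10}) applied to the pullback class $\tilde{\mcL}+\epsilon K^{\log}_{(\tilde{\mcX},\tilde{\mcX}_0)} = f^*(\mcL+\epsilon K^{\log}_{(\mcX,\mcX_0)})$, obtaining $\la (\tilde{\mcL}+\epsilon K^{\log})^{n+1}\ra = d\,\la (\mcL+\epsilon K^{\log})^{n+1}\ra$ for all small $\epsilon$, and then differentiates at $\epsilon=0$ (using the differentiability of volume from \cite{BFJ09}) to get $\la\tilde{\mcL}^n\ra\cdot K^{\log}_{(\tilde{\mcX},\tilde{\mcX}_0)} = d\,\la\mcL^n\ra\cdot K^{\log}_{(\mcX,\mcX_0)}$ in one stroke. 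Setting $\epsilon=0$ also gives the $\bfE^\NA$ scaling. This avoids the compatibility-of-approximations issue entirely. If you replace your Fujita-approximation step with this volume-multiplicativity plus differentiation argument, your proof matches the paper's.
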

\begin{proof}
Let $(\mcX, \mcL)$ be any big model. We can take a base change $(\mcX^{(d)}, \mcL^{(d)})=(\mcX, \mcL)\times_{\bC, t\mapsto t^d}\bC$ such that its normalization $\tilde{\mcX}$ has reduced central fibers. Let $f: \tilde{\mcX}\rightarrow \mcX$ be the natural finite morphism and set $\tilde{\mcL}=f^*\mcL$. Then we have the identity 
$$K^{\log}_{(\tilde{\mcX}, \tilde{\mcX}_0)}:=K_{\tilde{\mcX}}+\tilde{\mcX}_0=f^*(K_{\mcX}+\mcX_0^{\rm red})=f^*K^{\log}_{(\mcX, \mcX_0)}.$$ 

It is known that volumes of big line bundles are multiplicative under generic finite morphisms (see \cite[Lemma 4.3]{Hol10}). So we get the identity 
\begin{equation}\label{eq-fmult1}
\la (\tilde{\mcL}+\epsilon K^{\log}_{(\tilde{\mcX}, \tilde{\mcX}_0)})^{n+1}\ra=d \cdot \la (\mcL+\epsilon K^{\log}_{(\mcX, \mcX_0)})^{n+1}\ra.
\end{equation}
Taking derivative with respect to $\epsilon$ at $\epsilon=0$, we also get:
\begin{equation}\label{eq-fmult2}
\la \tilde{\mcL}^{n}\ra\cdot K^{\log}_{(\tilde{\mcX}, \tilde{\mcX}_0)}=d\cdot \la \mcL^{n}\ra \cdot K^{\log}_{(\mcX, \mcX_0)}.
\end{equation}
Moreover \eqref{eq-fmult1} for $\epsilon=0$ gives $\bfE^\NA(\phi_{\tilde{\mcL}})=d\cdot \bfE^\NA(\phi_\mcL)$. 
On the other hand, it is known we have the formula (see \cite{BoJ18a}) 
\begin{equation}
(\phi_{\tilde{\mcL}}-\phi_\triv)(x)=d\cdot (\phi_\mcL-\phi_\triv)(d^{-1}x), \quad \text{ for all } x\in X^\NA.
\end{equation}
So we get the identity $\bfJ^\NA(\phi_{\tilde{\mcL}})=d\cdot \bfJ^\NA(\phi_{\mcL})$ by \eqref{eq-JNA}.
Combining these identities with the positive intersection formula \eqref{eq-Mmodel}, the statement now follows easily.

\end{proof}

\section{First Riemann-Roch coefficients of big line bundles and Fujita approximations}\label{sec-conj}

In view of the above intersection formula, it seems natural to consider the following invariant for big line bundles.

\begin{defn}
Let $\mcL$ be a big line bundle over a projective manifold $\mcX$ of dimension $n+1$. The first Riemann-Roch coefficient (1st-RR coefficient) of $\mcL$ is defined to be:
\begin{equation}
\mathfrak{r}_1(\mcX, \mcL)=\la \mcL^n\ra \cdot K_{\mcX}.
\end{equation}
If the base manifold $\mcX$ is clear, we just write $\mathfrak{r}_1(\mcX, \mcL)$ as $\mathfrak{r}_1(\mcL)$.
\end{defn}
The zero-th Riemann-Roch coefficient is of course the volume of $\mcL$:
\begin{equation}
\mathfrak{r}_0(\mcX, \mcL):=\vol_{\mcX}(\mcL)=\la \mcL^{n+1}\ra.
\end{equation}
One would hope that $\mathfrak{r}_1(\mcX, \mcL)$ is the second order coefficients in the expansion of $h^0(\mcX, m\mcL)$. This is true if $\mcL$ is big and nef by Fujita's vanishing theorem. But due to the example in \cite{CS93}, this does not seem to be true for general big line bundles.
\begin{lem}\label{lem-pullinv}
If $\mu: \mcY\rightarrow \mcX$ is a birational morphism between smooth projective manifold, which is a composition of blowups along smooth subvarieties. Then we have:
\begin{equation}\label{eq-binv}
\mathfrak{r}_1(\mcL)=\mathfrak{r}_1(\mu^*\mcL).
\end{equation}
\end{lem}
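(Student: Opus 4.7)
The plan is to use the projection-formula-like behavior of positive intersection products together with Theorem~\ref{thm-restvol} to reduce the statement to a vanishing along exceptional divisors.

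First, since $\mu$ is a composition of blowups along smooth subvarieties between smooth projective varieties, the relative canonical divisor is effective and exceptional: write
\begin{equation*}
K_\mcY = \mu^*K_\mcX + E_\mu, \qquad E_\mu = \sum_j a_j E_{\mu,j},
\end{equation*}
with $a_j>0$ and each $E_{\mu,j}$ a prime $\mu$-exceptional divisor. By linearity of the positive intersection product (Definition~\ref{def-movint}),
\begin{equation*}
\mathfrak{r}_1(\mu^*\mcL) = \la (\mu^*\mcL)^n\ra \cdot \mu^*K_\mcX + \sum_j a_j \la (\mu^*\mcL)^n\ra \cdot E_{\mu,j},
\end{equation*}
so it suffices to prove two things: (a) a projection formula $\la (\mu^*\mcL)^n\ra \cdot \mu^*K_\mcX = \la \mcL^n\ra \cdot K_\mcX$, and (b) the vanishing $\la (\mu^*\mcL)^n\ra \cdot E_{\mu,j} = 0$ for each $j$.

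For (a), I would argue directly from Definition~\ref{def-movint}. The supremum defining $\la (\mu^*\mcL)^n\ra \cdot \mu^*K_\mcX$ is over pairs $(\nu',E')$ with $\nu': \mcY'\to \mcY$ a birational morphism and $E'$ effective making $\nu'^*\mu^*\mcL - E'$ nef; the composition $\mu\circ\nu': \mcY'\to \mcX$ is birational, so this sup is dominated by the analogous sup defining $\la \mcL^n\ra \cdot K_\mcX$. Conversely, given any model $\nu: \mcX'\to \mcX$ appearing in the latter supremum, I would resolve the indeterminacy of $\nu^{-1}\circ\mu$ to produce a common dominating model $\mcY'\to \mcY$ over $\mcX'$, which shows the opposite inequality. (Alternatively, one may invoke that $\la \mcL^n\ra$, as a class on the Riemann--Zariski space of $\mcX$, is birationally invariant under pullback, as in \cite{BFJ09}.)

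For (b), I would use Theorem~\ref{thm-restvol}(2): $\la (\mu^*\mcL)^n\ra \cdot E_{\mu,j} = \vol_{\mcY|E_{\mu,j}}(\mu^*\mcL)$. Since $\mu$ is birational between smooth varieties, $\mu_*\mcO_\mcY = \mcO_\mcX$, hence $H^0(\mcY, m\mu^*\mcL) = H^0(\mcX, m\mcL)$, and the image in $H^0(E_{\mu,j}, m\mu^*\mcL|_{E_{\mu,j}})$ is contained in the image of the restriction map
\begin{equation*}
(\mu|_{E_{\mu,j}})^*: H^0(Z_j, m\mcL|_{Z_j}) \longrightarrow H^0(E_{\mu,j}, m(\mu|_{E_{\mu,j}})^*\mcL|_{Z_j}),
\end{equation*}
where $Z_j := \mu(E_{\mu,j})$ has $\dim Z_j < \dim E_{\mu,j} = n$ because $E_{\mu,j}$ is $\mu$-exceptional. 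Hence the dimension of this image is $O(m^{\dim Z_j}) = o(m^n)$, forcing $\vol_{\mcY|E_{\mu,j}}(\mu^*\mcL)=0$. Combining (a) and (b) yields \eqref{eq-binv}. The only mildly subtle point is the projection-formula step (a); the vanishing (b) is immediate from the restricted-volume interpretation, which is exactly why the positive-intersection formalism of \cite{BFJ09, ELMNP09} is the right language here.
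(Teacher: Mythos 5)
Your proof follows the same overall strategy as the paper: decompose $K_\mcY = \mu^*K_\mcX + E_\mu$ with $E_\mu$ effective and $\mu$-exceptional, establish the projection-formula identity $\la(\mu^*\mcL)^n\ra\cdot\mu^*K_\mcX = \la\mcL^n\ra\cdot K_\mcX$, and kill the exceptional contribution via the restricted-volume interpretation $\la(\mu^*\mcL)^n\ra\cdot E_{\mu,j}=\vol_{\mcY|E_{\mu,j}}(\mu^*\mcL)=0$ using the dimension drop $\dim\mu(E_{\mu,j})<n$. The one place you diverge is the projection-formula step: you argue directly from Definition~\ref{def-movint} by sandwiching the two defining suprema via resolution of indeterminacies (or by invoking birational invariance of $\la\mcL^n\ra$ on the Riemann--Zariski space, as in \cite{BFJ09}), whereas the paper writes $K_\mcX=A_1-A_2$ with $A_1,A_2$ general very ample and reuses the strict-transform-equals-total-transform argument from Proposition~\ref{prop-REphi}; your route is a bit more self-contained and avoids the genericity choice, and your factorization of the restriction through $H^0(\mu(E_{\mu,j}), m\mcL|_{\mu(E_{\mu,j})})$ is stated slightly more carefully than the paper's, which asserts an equality of $H^0$'s rather than the needed containment of images, but in substance the two proofs coincide.
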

\begin{proof}
Write $K_{\mcX}$ as the difference of very ample divisors $A_1-A_2$ and arguing as in the proof of Proposition \ref{prop-REphi}, we see that:
\begin{equation}
\la \mcL^n\ra \cdot K_{\mcX}=\la \mu^*\mcL^n\ra \cdot \mu^*K_{\mcX}.
\end{equation}
Let $E_i$ be the exceptional divisor of $\mu$. We just need to show that $\la \mu^*\mcL^n\ra\cdot E_i=\vol_{\mcY|E_i}(\mu^*\mcL^n)=0$. This can be seen by the inclusion:
\begin{equation}
{\rm Im}\left(H^0(\mcY, m\mu^*\mcL)\rightarrow H^0(E_i, m\mu^*\mcL|_{E_i})\right)\subseteq H^0(E_i, m \mu^*\mcL|_{E_i})=H^0(\mu_*(E_i), m\mcL|_{\mu_*(E_i)})
\end{equation}
and using the fact that the right-hand-side is equal to $o(m^{n})$ because $\dim(\mu_*(E_i))<n$.  
\end{proof}
If we consider $\mcL$ as a Cartier $b$-divisor in the sense of Shokurov, then because of identity \eqref{eq-binv}, $\mathfrak{r}_1(\mcL)$ is an invariant of the Cartier $b$-divisor $\mcL$.

The following lemma follows immediately from the results in \cite[section 3.1]{Mat13}.
\begin{lem}\label{lem-Zar}
Let $\mcL=\mcP+\mcN$ be the divisorial Zariski decomposition of $\mcL$. Then we have:
\begin{equation}
\mathfrak{r}_1(\mcL)=\mathfrak{r}_1(\mcP).
\end{equation}
Moreover if $\mcL$ admits a Zariski decomposition, i.e. if $\mcP$ is nef, then we have:
\begin{equation}\label{eq-e1nef}
\mathfrak{r}_1(\mcL)=\mcP^{n}\cdot K_{\mcX}.
\end{equation}
\end{lem}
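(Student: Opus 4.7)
The strategy is to reduce both statements to a single input from \cite[Section 3.1]{Mat13}: for the divisorial Zariski decomposition $\mcL=\mcP+\mcN$, the positive intersection products of $\mcL$ and $\mcP$ coincide on divisor classes,
\[
\la \mcL^n\ra\cdot D \;=\; \la \mcP^n\ra\cdot D \quad\text{for every prime divisor } D\subset \mcX.
\]
Intuitively, in the analytic description \eqref{eq-intalp} one can arrange the K\"ahler currents with logarithmic poles representing $c_1(\mcL)$ so that their smooth semipositive parts $\beta$ approximate $\mcP$, with $\mcN$ absorbed into the effective divisor $\{E\}$; this is what makes the negative part invisible to the positive intersection product. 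Equivalently, combining Theorem \ref{thm-restvol}(2) with Matsumura's identification of restricted volumes $\vol_{\mcX|D}(\mcL)=\vol_{\mcX|D}(\mcP)$ gives the same identity.

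Granted this, the first assertion $\mathfrak{r}_1(\mcL)=\mathfrak{r}_1(\mcP)$ follows by writing $K_{\mcX}=A_1-A_2$ as a difference of very ample divisors (as in the proof of Proposition \ref{prop-REphi}), choosing the $A_i$ sufficiently general so that pairing is bilinear on Fujita approximations, and then applying the displayed identity componentwise:
\[
\la \mcL^n\ra\cdot K_{\mcX} \;=\; \la \mcL^n\ra\cdot A_1 - \la \mcL^n\ra\cdot A_2 \;=\; \la \mcP^n\ra\cdot A_1 - \la \mcP^n\ra\cdot A_2 \;=\; \la \mcP^n\ra\cdot K_{\mcX}.
\]

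For \eqref{eq-e1nef}, I would invoke the well-known degeneration of the positive intersection product to the ordinary intersection product on nef classes: if $\mcP$ is nef, then taking $\mu=\id$ and $E=0$ in Definition \ref{def-movint} shows $\la \mcP^n\ra\cdot D\ge \mcP^n\cdot D$ for effective $D$, while the reverse inequality is immediate from $\mu^*\mcP-E\preceq \mu^*\mcP$ combined with the projection formula and nefness of $\mcP$. Hence $\la \mcP^n\ra=\mcP^n$ in $H^{n,n}(\mcX,\bR)$, and substitution into part (1) yields $\mathfrak{r}_1(\mcL)=\mcP^n\cdot K_{\mcX}$.

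The main (and essentially only) obstacle is bookkeeping with the reference: one must check that \cite[Section 3.1]{Mat13} indeed provides the identity $\la\mcL^n\ra\cdot D=\la\mcP^n\ra\cdot D$ on divisor classes, not merely the top-degree statement $\vol(\mcL)=\vol(\mcP)$. If only the top-degree version is available, the divisor-class version can be recovered by differentiating $\vol(\mcL+t D)=\vol(\mcP+tD)$ at $t=0$ (using that moving $\mcN$ is absorbed in the analytic computation), or by directly combining Theorem \ref{thm-restvol}(2) with the coincidence of restricted volumes along prime divisors. Everything else is formal.
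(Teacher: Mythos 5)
Your proposal is correct and follows the same route as the paper, which literally proves this lemma in one line by citing \cite[Section~3.1]{Mat13}; you supply the underlying details (reduce to $\vol_{\mcX|A_i}(\mcL)=\vol_{\mcX|A_i}(\mcP)$ for general very ample $A_i$ via Theorem~\ref{thm-restvol}(2), then use $\la\mcP^n\ra=\mcP^n$ for nef $\mcP$). One small caution: your argument for the reverse inequality in $\la\mcP^n\ra\cdot D\le\mcP^n\cdot D$ is not quite immediate from $\mu^*\mcP-E\preceq\mu^*\mcP$, since intersecting a pseudoeffective curve class $\beta^i\alpha^{n-1-i}\cdot\mu^*D$ with the effective $E$ need not be nonnegative a priori; it is cleaner to cite the standard fact (\cite{BFJ09}, or $\vol_{\mcX|D}(\mcP)=\mcP^n\cdot D$ for nef $\mcP$ from \cite{ELMNP09}) that positive intersection products of nef classes are ordinary intersection products.
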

We propose the following main conjecture. 
\begin{conj}\label{conj-Fujmodel}
Let $(\mcX, \mcL)$ be a {\it big} model of $(X, L)$. Then there exists a sequence of blowups $\mu_m: \mcX_m\rightarrow \mcX$ along $\bC^*$-equivariant ideal sheaves cosupported on $\mcX_0$ and decompositions into $\bQ$-divisors 
$\mu_m^*\mcL=\mcL_m+E_m$ with $\mcL_m$ semiample and $E_m$ effective supported on the exceptional divisor of $\mu_m$ such that:
\begin{equation}\label{eq-RRconv}
\lim_{m\rightarrow+\infty} \vol_{\mcX_m}(\bar{\mcL}_m)=\vol_{\mcX}(\bar{\mcL}) \quad \text{ and } \quad
\lim_{m\rightarrow+\infty} \mathfrak{r}_1(\bar{\mcL}_m)=\mathfrak{r}_1(\bar{\mcL}). 
\end{equation}
\end{conj}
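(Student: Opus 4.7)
The plan is to combine Fujita's approximation theorem (for the volume) with a careful choice of log-resolution (to control the first Riemann-Roch coefficient). First I would take $\mu_m \colon \mcX_m \to \mcX$ to be a $\bC^*$-equivariant log-resolution that dominates the normalized blowup of the $\pi$-relative base ideal $\fb_m = \fb(|m\mcL|)$. Since $(\mcX, \mcL)$ is a big model, these ideals are $\bC^*$-equivariant and cosupported on $\mcX_0$, so the equivariance and support requirements of the conjecture are automatic. Setting $\mcL_m = \mu_m^*\mcL - \frac{1}{m}\tilde{E}_m$ (semiample) and $E_m = \frac{1}{m}\tilde{E}_m$ (effective and exceptional), the first limit in \eqref{eq-RRconv} --- convergence of volumes --- is precisely Fujita's theorem applied to the compactifications.

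The real content of the conjecture is the second limit. Writing $K_{\mcX_m} = \mu_m^* K_{\mcX} + F_m$ with $F_m$ supported on the exceptional locus of $\mu_m$, and using that $\la \mcL_m^n\ra = \mcL_m^n$ since $\mcL_m$ is semiample, I split
\[
\mathfrak{r}_1(\mcL_m) = \mcL_m^n \cdot \mu_m^* K_{\mcX} + \mcL_m^n \cdot F_m.
\]
For the first summand, I would write $K_{\mcX} = A_1 - A_2$ as a difference of very ample divisors, chosen general enough that their strict and total transforms under $\mu_m$ agree for every $m$; Theorem \ref{thm-restvol} together with the asymptotic-intersection characterization of the positive intersection product then yields $\mcL_m^n \cdot \mu_m^* K_{\mcX} \to \la \mcL^n\ra \cdot K_{\mcX} = \mathfrak{r}_1(\mcL)$, by exactly the argument of Proposition \ref{prop-REphi}. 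The conjecture therefore reduces to showing that the exceptional contribution $\mcL_m^n \cdot F_m$ tends to $0$.

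This last step is the main obstacle. The Boucksom-Favre-Jonsson orthogonality (\cite[Corollary 3.6]{BFJ09}) supplies $\frac{1}{m}\mcL_m^n \cdot \tilde{E}_m \to 0$, which controls any component of $F_m$ proportional to $\tilde{E}_m$. Components of $F_m$ not proportional to $\tilde{E}_m$, coming from discrepancies of the normalized blowup or from extra divisors introduced by a further resolution of singularities, are not controlled by orthogonality alone. My attempt would be to choose the log-resolution $\mu_m$ so that $\supp F_m \subseteq \supp \tilde{E}_m$ and $|F_m|$ is numerically bounded by $C \cdot \tilde{E}_m$ with $C$ independent of $m$; combined with orthogonality this would give $\mcL_m^n \cdot F_m \to 0$.

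The fundamental difficulty, pointed out by Boucksom (cf.\ the acknowledgement and Remark \ref{rem-Boucksom}), is that simultaneous Fujita approximation of two distinct positive intersection classes by a single sequence of semiample models $\mcL_m$ is not automatic: passing to a refinement in order to improve one of the two limits can worsen the other. When $\mcL$ admits a birational Zariski decomposition, Lemma \ref{lem-Zar} lets one take $\mcL_m$ to be a pullback of the nef positive part and both convergences become trivial. In the absence of such a decomposition --- as in Nakayama's examples --- I expect one must exploit either the $\bC^*$-equivariant structure or additional combinatorial features of the specific model. A natural first test would be to verify the conjecture in the toric and spherical settings via polytope calculations, and then to attempt Nakayama's non-Zariski-decomposable examples by ad hoc equivariant resolutions, which is presumably the strategy pursued in the appendix.
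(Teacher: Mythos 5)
The statement you are asked to prove is Conjecture~\ref{conj-Fujmodel}, which the paper itself leaves open; it is the main conjectural input to which the uniform YTD conjecture is reduced, and the paper only proves a reduction lemma (Lemma~\ref{lem-Abd}), a verification for big $\bQ$-line bundles admitting a birational Zariski decomposition, and a case-by-case verification in the appendix for some of Nakayama's examples. Your proposal correctly recognizes this and does not claim a complete proof, so the right comparison is between your reduction strategy and the paper's.

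Your reduction is essentially the same as the paper's. Taking $\mu_m$ to dominate the normalized blowup of $\fb(|m\mcL|)$ (or of the asymptotic multiplier ideal $\cJ(\|m\mcL-G\|)$ as the paper prefers), the convergence of volumes is Fujita's theorem, the convergence of $\mcL_m^n\cdot\mu_m^*K_\mcX$ to $\la\mcL^n\ra\cdot K_\mcX$ is exactly the argument the paper gives after Lemma~\ref{lem-Abd} (a slight refinement of Proposition~\ref{prop-REphi}), and the whole question collapses to whether $\mcL_m^n\cdot K_{\mcX_m/\mcX}\to 0$, which is precisely condition~\eqref{eq-cond3}. So you have, in effect, rederived Lemma~\ref{lem-Abd}. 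Your further intuition — that the conjecture becomes trivial in the presence of a birational Zariski decomposition (Lemma~\ref{lem-Zar}), that the spherical/Mori-dream-space case is easy, and that Nakayama's examples need an ad hoc equivariant analysis — also matches Remark~\ref{rem-spherical} and the appendix.

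There is, however, one concrete quantitative slip. You propose as a sufficient condition that $F_m = K_{\mcX_m/\mcX}$ be supported on $\supp\tilde{E}_m$ and \emph{numerically bounded by $C\cdot\tilde{E}_m$} with $C$ independent of $m$, and claim that together with the BDPP orthogonality bound this yields $\mcL_m^n\cdot F_m\to 0$. This does not follow. Orthogonality (Theorem~\ref{thm-BDPP}) controls $\mcL_m^n\cdot E_m = \tfrac{1}{m}\mcL_m^n\cdot\tilde{E}_m$, not $\mcL_m^n\cdot\tilde{E}_m$ itself; the latter need not tend to zero. If $F_m\le C\,\tilde{E}_m$, one gets only $\mcL_m^n\cdot F_m \le C\,m\,(\mcL_m^n\cdot E_m)$, and since $\mcL_m^n\cdot E_m$ decays no better than the square root of $\vol(\mcL)-\vol(\mcL_m)$, there is no reason for the product with the extra factor of $m$ to vanish. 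The correct scaling is the one stated in Lemma~\ref{lem-Abd}: one needs $\ord_F(K_{\mcX_m/\mcX})\le C\cdot\ord_F(E_m)$ with $E_m=\tfrac{1}{m}\tilde{E}_m$, i.e. the relative canonical divisor must be bounded by $C/m$ times the ideal-exceptional divisor, not by a constant times it. This is a factor of $m$ stronger than what you wrote, and it is exactly the delicate estimate that the appendix verifies for Nakayama's examples via Rees valuations of monomial ideals. With that correction your reduction coincides with the paper's.
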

Because of the positive intersection formula in \eqref{eq-Mmodel} and the reduction in \cite{Li20}, this indeed implies Boucksom-Jonsson's regularization conjecture. Moreover by the following lemma and the work in \cite{Li20}, it would  complete the solution of Yau-Tian-Donaldson conjecture for cscK metrics.
\begin{lem}\label{lem-conjBJ}
For any big model $(\mcX, \mcL)$, 
Conjecture \ref{conj-Fujmodel} implies that there exists $\phi_m\in \mcH^\NA$ such that $\phi_m$ converges to $\phi_{(\mcX, \mcL)}$ in the strong topology and $\bfM^\NA(\phi_m)\rightarrow \bfM^\NA(\phi_{(\mcX, \mcL)})$. 
\end{lem}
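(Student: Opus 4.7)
The plan is to take the approximation $(\mcX_m, \mcL_m)$ produced by Conjecture \ref{conj-Fujmodel} and set $\phi_m := \phi_{(\mcX_m, \mcL_m)}$. Since $\mcL_m$ is semiample, $(\mcX_m, \mcL_m)$ is a test configuration and $\phi_m \in \mcH^\NA$; concretely $\phi_m = \phi_\triv + f_{\mcL_m}$. The decomposition $\mu_m^*\mcL = \mcL_m + E_m$ with $E_m$ effective exceptional yields $f_{\mcL_m} \le f_{\mu_m^*\mcL} = f_\mcL$ as model functions (equivalent models define the same model function), and since $f_{\mcL_m}$ is $\phi_\triv$-psh the envelope definition gives $f_{\mcL_m} \le P(f_\mcL)$, hence the pointwise bound $\phi_m \le \phi_\mcL$ on $X^\NA$.

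For the strong convergence I combine this majoration with energy convergence. By Proposition \ref{prop-REphi} and semiamplicity of $\mcL_m$, $\bfE^\NA(\phi_m) = \frac{1}{n+1}\mcL_m^{n+1} = \frac{1}{n+1}\vol(\mcL_m)$, whereas $\bfE^\NA(\phi_\mcL) = \frac{1}{n+1}\la\mcL^{n+1}\ra = \frac{1}{n+1}\vol(\mcL)$, so the volume part of \eqref{eq-RRconv} gives $\bfE^\NA(\phi_m) \to \bfE^\NA(\phi_\mcL)$. In the Boucksom-Jonsson strong topology on $\mcE^{1,\NA}$, for a sequence dominated by a fixed element the $d_1$-quasi-distance reduces to the energy gap, so $d_1(\phi_m, \phi_\mcL) = \bfE^\NA(\phi_\mcL) - \bfE^\NA(\phi_m) \to 0$ and $\phi_m \to \phi_\mcL$ strongly.

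For the Mabuchi convergence I apply the intersection formula \eqref{eq-Mmodel} on both sides and expand
\[
K^{\log}_{\mcX_m/\bP^1} = K_{\mcX_m} - \pi_m^*K_{\bP^1} + (\mcX_m)_0^\red - (\mcX_m)_0
\]
to obtain
\[
\bfM^\NA(\phi_m) = \mathfrak{r}_1(\mcL_m) + 2V + \mcL_m^n \cdot \left((\mcX_m)_0^\red - (\mcX_m)_0\right) + \frac{\ud{S}}{n+1}\mcL_m^{n+1},
\]
using $\mcL_m^n \cdot \pi_m^*K_{\bP^1} = -2V$ (since $\pi_m^*K_{\bP^1} \equiv -2[{\rm fiber}]$ numerically and the fibre degree of $\mcL_m$ is $V$). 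The analogous formula holds on the $\mcL$-side with positive intersection products; by \eqref{eq-RRconv} the $\mathfrak{r}_1$- and volume-terms converge, and the $2V$-terms agree identically.

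The hard part will be the log-boundary convergence $\mcL_m^n \cdot ((\mcX_m)_0^\red - (\mcX_m)_0) \to \la\mcL^n\ra \cdot (\mcX_0^\red - \mcX_0)$. Since $\mu_m$ is an isomorphism off $\mcX_0$, $(\mcX_m)_0 = \mu_m^*\mcX_0$; writing $\mu_m^*E_i = \tilde E_i^{(m)} + \sum_j c_{ij}^{(m)} F_j^{(m)}$ splits the boundary intersection into strict-transform and new-exceptional contributions. For the strict-transform part one would need $\mcL_m^n \cdot \tilde E_i^{(m)} \to \|\mcL^n \cdot E_i\| = \la\mcL^n\ra \cdot E_i$ via Theorem \ref{thm-restvol}, which is not automatic for an arbitrary Fujita-type approximation and is delicate even with the $\mathfrak{r}_1$-control from the conjecture; for the new-exceptional part one uses $F_j^{(m)} \subset \bfB_+(\mu_m^*\mcL)$ so that $\la(\mu_m^*\mcL)^n\ra \cdot F_j^{(m)} = 0$ and would deduce $\mcL_m^n \cdot F_j^{(m)} \to 0$ via a Fujita-style orthogonality argument in the spirit of \cite[Corollary 3.6]{BFJ09}. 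A cleaner alternative is to first reduce, via Proposition \ref{prop-reduced} and a compatible base change, to big models with reduced central fibres, in which case $\mcX_0^\red = \mcX_0$ kills the log-boundary term on the $\mcL$-side and substantially simplifies the corresponding control on the $\mcX_m$-side.
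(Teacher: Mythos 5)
Your choice of $\phi_m = \phi_{(\mcX_m,\mcL_m)}$, the derivation of $\phi_m \le \phi_\mcL$ from $E_m \ge 0$ and the envelope, and the strong-convergence argument all match the paper; for the last the paper invokes $\bfJ^\NA_\phi(\phi_m) = \int(\phi_m-\phi)\MA^\NA(\phi_m) - \bfE^\NA(\phi_m) + \bfE^\NA(\phi)$ (from \cite[Prop. 6.26]{BoJ18b}) and bounds it by the energy gap exactly as you do, so that part is fine.

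The problem is the Mabuchi convergence. You correctly expand $\bfM^\NA$ via \eqref{eq-Mmodel} into an $\mathfrak{r}_1$-piece, a constant $2V$, the log-boundary piece $\la\mcL^n\ra\cdot(\mcX_0^\red-\mcX_0)$, and the energy piece, and you note the first and last converge by \eqref{eq-RRconv}. But you then declare the log-boundary piece a ``hard part'' needing separate strict-transform convergence $\mcL_m^n\cdot\tilde E_i^{(m)}\to\la\mcL^n\ra\cdot E_i$ plus an orthogonality estimate for the new exceptionals; neither is actually needed. After the base change to reduced $\mcX_0$ (which both the paper and you, as your ``alternative,'' invoke), the log-boundary contribution is the \emph{same constant on both sides and does not need to be taken to a limit}. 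On the $\mcL$-side, $\mcX_0^\red=\mcX_0\equiv\mcX_t$ so $\la\mcL^n\ra\cdot\mcX_0^\red = V$ by \eqref{eq-volXt}--\eqref{eq-totalV}. On the $\mcL_m$-side, $(\mcX_m)_0^\red$ differs from $(\mcX_m)_0=\mu_m^*\mcX_0$ only along new $\mu_m$-exceptional prime divisors $F_j$, and for each of these
\[
0 \;\le\; \mcL_m^n\cdot F_j \;\le\; \la(\mu_m^*\mcL)^n\ra\cdot F_j \;=\; \vol_{\mcX_m|F_j}(\mu_m^*\mcL) \;=\; 0,
\]
the left inequality because $\mcL_m$ is nef, the middle because $\mu_m^*\mcL = \mcL_m + E_m$ is a competitor in Definition \ref{def-movint}, and the vanishing by the Lemma \ref{lem-pullinv} argument (sections of $m\mu_m^*\mcL$ restricted to $F_j$ factor through the lower-dimensional $\mu_m(F_j)$). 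Hence $\mcL_m^n\cdot(\mcX_m)_0^\red = \mcL_m^n\cdot(\mcX_m)_0 = V$ identically, and $\bfM^\NA(\phi_m)\to\bfM^\NA(\phi)$ follows from the two limits in \eqref{eq-RRconv} with no further asymptotic control. You had the right reduction in hand but did not carry it through, and instead opened a line of argument (individual convergence of $\mcL_m^n\cdot\tilde E_i^{(m)}$, orthogonality for the $F_j$) that you correctly flagged as delicate and which is in fact unnecessary.
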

\begin{proof}
By the same base change construction as in the proof of Proposition \ref{prop-reduced}, we can assume that $\mcX$ has a reduced central fibre.

For simplicity of notations, we denote by $\phi=\phi_{(\mcX, \mcL)}$ (resp. $\phi_m$) the non-Archimedean metrics associated to $\mcL$ (resp. $\mcL_m$). Then because $E_m$ is effective, we have $\phi\ge \phi_m$. We claim that $\phi_m\rightarrow \phi$ strongly. Indeed, by \cite[Proposition 6.26]{BoJ18b}, it suffices to show the following non-negative quantity converges to 0 as $m\rightarrow+\infty$:
\begin{equation}
\bfJ^\NA_\phi(\phi_m)=\int_{X^\NA}(\phi_m-\phi)\MA^\NA(\phi_m)-\bfE^\NA(\phi_m)+\bfE^\NA(\phi).
\end{equation}
This follows immediately from $\phi_m\le \phi$ and \eqref{eq-ENAPphi}:
\begin{eqnarray*}
0\le \bfJ^\NA_\phi(\phi_m)\le -\bfE^\NA(\phi_m)+\bfE^\NA(\phi)=\frac{\vol_{\mcX}(\mcL)}{n+1}-\frac{\vol_{\mcX_m}(\mcL_m)}{n+1}.
\end{eqnarray*}


By the positive intersection formula \eqref{eq-Mmodel} the second identity in \eqref{eq-RRconv} implies $\bfM^\NA(\phi_m)\rightarrow \bfM^\NA(\phi)$.
\end{proof}
We hope the conjecture \ref{conj-Fujmodel} can be studied by using the geometric tools introduced in the study of Fujita's approximation theorem.
We recall the following definition 
\begin{defn}[{see \cite[Definition 11.4.3]{Laz04}}]
Let $\mcL$ be a big line bundle. A Fujita approximation of $\mcL$ consists of a projective birational morphism $\mu: \mcX'\rightarrow \mcX$ with $\mcX'$ irreducible together with a decomposition $\mu^*\mcL=A+E$ in $N^1(\mcX)_\bQ$ such that $A$ is big and semiample and $E$ is effective. 
\end{defn}
In the more general context of big line bundles, we conjecture the following result:
\begin{conj}\label{conj-Fuj}
Let $\mcL$ be a big line bundle over a smooth projective manifold $\mcX$. Then there exists a sequence of Fujita approximations (\cite[Definition 11.4.3]{Laz04}), i.e. birational morphisms  $\mu_m: \mcX_m\rightarrow \mcX$ and decompositions into $\bQ$-divisors $\mu_m^*\mcL=\mcL_m+E_m$ with $\mcL_m$ ample and $E_m$ effective,  such that:
\begin{equation}\label{eq-RRconv2}
\lim_{m\rightarrow+\infty} \vol_{\mcX_m}(\mcL_m)=\vol_{\mcX}(\mcL)\quad \text{ and }\quad
\lim_{m\rightarrow+\infty} \mathfrak{r}_1(\mcL_m)=\mathfrak{r}_1(\mcL).
\end{equation}
\end{conj}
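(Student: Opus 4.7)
The plan is to start from the classical Fujita approximation and try to promote it so that the 1st Riemann--Roch coefficient also converges. Let $\mu_m : \mcX_m \to \mcX$ be a log resolution of the base ideal $\fb_m := \fb(|m\mcL|)$, write $\mu_m^{-1}\fb_m \cdot \MO_{\mcX_m} = \MO_{\mcX_m}(-\tilde{E}_m)$ with $\tilde{E}_m$ an effective $\mu_m$-exceptional divisor, and set $\mcL_m := \mu_m^*\mcL - \tfrac{1}{m}\tilde{E}_m$. Then $\mcL_m$ is semiample; after perturbing by a small ample class on $\mcX_m$ (which shifts all the invariants below by $o(1)$), one may take $\mcL_m$ to be ample. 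Classical Fujita approximation gives the first convergence in \eqref{eq-RRconv2}. For the second, apply Lemma \ref{lem-pullinv} together with the basic characterization of positive intersection products already used in Proposition \ref{prop-REphi}: for every divisor $D$ on $\mcX$, $\mcL_m^n \cdot \mu_m^*D \to \la \mcL^n \ra \cdot D$. Decomposing $K_{\mcX_m} = \mu_m^*K_{\mcX} + K_{\mcX_m/\mcX}$, we obtain
\begin{equation*}
\mathfrak{r}_1(\mcL_m) - \mathfrak{r}_1(\mcL) \;=\; \mcL_m^n \cdot K_{\mcX_m/\mcX} + o(1),
\end{equation*}
so the conjecture reduces to the vanishing of this purely exceptional intersection:
\begin{equation}\label{eq-keyasser}
\lim_{m \to +\infty} \mcL_m^n \cdot K_{\mcX_m/\mcX} \;=\; 0.
\end{equation}

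The only general tool I see to bound the intersection of $\mcL_m^n$ with the exceptional divisor is the orthogonality property of \cite[Corollary 4.5]{BDPP13}, \cite[Corollary 3.6]{BFJ09}, which applied to $\mu_m^*\mcL = \mcL_m + \tfrac{1}{m}\tilde{E}_m$ yields
\begin{equation*}
\mcL_m^n \cdot \tfrac{1}{m}\tilde{E}_m \;=\; \vol(\mcL) - \mcL_m^{n+1} + o(1) \;\longrightarrow\; 0,
\end{equation*}
i.e.\ $\mcL_m^n \cdot \tilde{E}_m = o(m)$. Writing $K_{\mcX_m/\mcX} = \sum_E A_\mcX(E)\, E$ and $\tilde{E}_m = \sum_E v_E(\fb_m)\, E$ over the prime exceptional divisors of $\mu_m$, one would like to conclude
\begin{equation*}
\mcL_m^n \cdot K_{\mcX_m/\mcX} \;\le\; \max_E \frac{A_\mcX(E)}{v_E(\fb_m)/m}\cdot \frac{1}{m}\,\mcL_m^n \cdot \tilde{E}_m \;\longrightarrow\; 0.
\end{equation*}
This is valid as soon as $v_E(\fb_m)/m$ stays bounded away from $0$ uniformly in $m$ and in $E$, which in turn holds whenever every exceptional component $E$ extracted by $\mu_m$ satisfies $v_E(\|\mcL\|) > 0$. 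Concretely, I would attempt to take $\mu_m$ to be a log resolution of the asymptotic multiplier ideals $\mcJ(\|m\mcL\|)$ (or a well-chosen subsequence), because the Nadel-type vanishing and the Demailly--Ein--Lazarsfeld subadditivity tend to prevent the resolution from extracting divisors on which the asymptotic order of vanishing is zero.

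The main obstacle I expect is precisely that Nakayama's examples preclude any such clean choice of birational model: for a general big line bundle the $\sigma$-decomposition has no birational incarnation, and a log resolution of $\fb(|m\mcL|)$ typically extracts exceptional divisors $E$ on which $v_E(\|\mcL\|)=0$ although $v_E(\fb_m) > 0$; on these the ratio $A_\mcX(E)/v_E(\fb_m)$ need not tend to zero, and one loses control of the contribution to $\mcL_m^n \cdot K_{\mcX_m/\mcX}$. This is presumably the delicate difficulty flagged in Remark \ref{rem-Boucksom}. A genuine resolution will likely require either a refined Fujita approximation that extracts only the ``essential'' divisors computing the movable part of $\mcL$ (so that the discrepancies of $\mu_m$ are truly controlled by $\tilde{E}_m$), or a quantitative strengthening of orthogonality that bounds $\mcL_m^n \cdot E$ component by component. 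For Nakayama's specific examples treated in the appendix, one expects the explicit geometry to permit an ad hoc verification of \eqref{eq-keyasser} without settling the general case.
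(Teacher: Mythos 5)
The statement you are addressing is stated in the paper as a \emph{conjecture}, not a theorem: the paper does not prove Conjecture~\ref{conj-Fuj} in general, and neither do you. So the relevant comparison is between your analysis and the paper's own reduction and partial verification, and here your proposal tracks the paper quite closely. Your reduction to the single assertion $\lim_m \mcL_m^n \cdot K_{\mcX_m/\mcX} = 0$ is precisely the content of the paper's Lemma~\ref{lem-Abd} (modulo the minor slip that $K_{\mcX_m/\mcX} = \sum_E (A_\mcX(E)-1)E$, not $\sum_E A_\mcX(E)E$); your use of Lemma~\ref{lem-pullinv} and of the convergence $\mcL_m^n\cdot\mu_m^*D\to\la\mcL^n\ra\cdot D$ is the same machinery, and your identification of the obstruction --- that a log resolution of $\fb(|m\mcL|)$ may extract divisors $E$ with $v_E(\|\mcL\|)=0$ on which $A_\mcX(E)/\ord_E(E_m)$ is uncontrolled --- is exactly the difficulty the paper singles out and defers to Remark~\ref{rem-Boucksom}.

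Two small caveats on your argument. First, your proposed sufficient condition (``$v_E(\fb_m)/m$ bounded away from $0$ uniformly'') is not quite right, because the log discrepancies $A_\mcX(E)$ can themselves grow without bound as you pass to higher and higher models $\mcX_m$; the correct and cleaner condition, used in Lemma~\ref{lem-Abd}, is boundedness of the \emph{ratio} $\ord_E(K_{\mcX_m/\mcX})/\ord_E(E_m)$ uniformly in $m$ and $E$. Second, the orthogonality estimate only gives $\frac{1}{m}\mcL_m^n\cdot\tilde E_m\to 0$, so even with bounded ratio you only get the desired convergence, never a rate; this matches the quantitative form of \cite[Theorem 4.1]{BDPP13} that the paper records as Theorem~\ref{thm-BDPP}.

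What the paper does \emph{beyond} your analysis: it proves Conjecture~\ref{conj-Fuj} unconditionally whenever $\mcL$ admits a birational Zariski decomposition (by pulling back to the nef part $\mcP$ and perturbing by a small ample class), and in the appendix it verifies the conjecture for certain Nakayama examples \emph{without} birational Zariski decomposition by computing the Rees valuations of the monomial multiplier ideals explicitly via Newton polygons and checking the discrepancy-versus-order bound of Lemma~\ref{lem-Abd} by hand. Your closing paragraph is a fair assessment of where the genuine difficulty lies, but you should be aware that as written your text does not constitute a proof of the statement, nor does the paper claim one.
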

\begin{rem}\label{rem-Boucksom}
Sebastien Boucksom pointed out to me that this conjecture could be formulated using the language of b-divisors. Such a formulation has some consequences and (hopefully) might be useful for studying this problem.
\end{rem}

Let's recall an orthogonality estimate by Boucksom-Demailly-P\v{a}un-Peternell (see also \cite[Theorem 11.4.21]{Laz04}):
\begin{thm}[{\cite[Theorem 4.1]{BDPP13}}]\label{thm-BDPP}
Fix any ample line bundle $H$ on $\mcX$. There exists a constant $C=C(\mcX, H)>0$ such that
any Fujita decomposition $(\mu: \mcX'\rightarrow \mcX, \mu^*\mcL=A+E)$ satisfies the estimate:
\begin{equation}
(A^n\cdot E)^2\le C\cdot (\vol_{\mcX}(\mcL)-\vol_{\mcX'}(A)).
\end{equation}
\end{thm}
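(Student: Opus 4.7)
The plan is to follow the strategy of \cite{BDPP13}, which combines the concavity of volume on the big cone with Khovanskii-Teissier type inequalities extended to big line bundles. Via a standard perturbation $A \rightsquigarrow A+\epsilon \mu^*H$ (and passing to the limit $\epsilon \to 0^+$ at the end), I would first reduce to the case where $A$ is ample on $\mcX'$; the constant $C$ is then allowed to depend on $H$.

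Next I would study the one-parameter family $V(t) := \vol_{\mcX'}(A+tE)$ for $t \in [0,1]$. By Boucksom's differentiability theorem, $V$ is $C^1$ on the interior of the big cone with $V'(t) = (n+1)\la (A+tE)^n\ra\cdot E$, and by the Brunn-Minkowski inequality of \cite{BFJ09} the function $V^{1/(n+1)}$ is concave. The endpoint values are $V(0) = A^{n+1}$ and $V(1) = \vol_{\mcX'}(\mu^*\mcL) = \vol_{\mcX}(\mcL)$, and since $A$ is ample, $V'(0) = (n+1) A^n \cdot E$. Concavity of $V^{1/(n+1)}$ combined with these endpoint values already yields a \emph{linear} lower bound $A^n \cdot E \ge (A^{n+1})^{n/(n+1)} \bigl( \vol(\mcL)^{1/(n+1)} - (A^{n+1})^{1/(n+1)} \bigr)$; the content of the theorem is the matching \emph{quadratic} upper bound, which is more delicate.

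For the upper bound I would invoke the extended Khovanskii-Teissier inequality for big classes (cf.\ \cite{BFJ09}), in the form
\[
(A^n \cdot \la \mu^*\mcL \ra)^{n+1} \ge (A^{n+1})^n \cdot \vol(\mcL),
\]
together with the orthogonality property $\la (\mu^*\mcL)^n \ra \cdot N = 0$ for the negative part $N$ of the divisorial Zariski decomposition of $\mu^*\mcL$ (\cite[Corollary~3.6]{BFJ09}). These allow one to express $A^n \cdot E = A^n \cdot (\mu^*\mcL - A)$ in terms of the volume defect $\vol(\mcL) - A^{n+1}$. The quadratic (rather than linear) character of the bound emerges from a Cauchy-Schwarz style manipulation, essentially a Hodge-Riemann inequality on the bilinear form $(X, Y) \mapsto A^{n-1} \cdot X \cdot Y$ applied with $X = Y = E$, with ambient constants controlled by fixed intersection numbers of powers of $\mu^*H$.

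The main obstacle is precisely this passage from the linear bound given by concavity/monotonicity to the quadratic bound claimed. Concavity alone cannot produce the square-root structure; one needs to combine it with a genuine second-order inequality of Hodge-Riemann type, and to argue uniformly across all Fujita decompositions $(\mu, A, E)$. Keeping the constant $C$ dependent only on $(\mcX, H)$, rather than on the particular decomposition at hand, is the delicate step, and it requires a careful tracking of the mixed intersection numbers $A^{n-k} \cdot E^{k+1}$ under birational pullback.
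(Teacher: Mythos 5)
The paper does not prove this statement; it is quoted directly from \cite[Theorem~4.1]{BDPP13} and used as a black box. So there is no in-paper argument to compare against, and your sketch must be judged on its own.

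Judged on its own, the sketch assembles the right circle of ideas (differentiability of volume, log-concavity, extended Khovanskii--Teissier, orthogonality) but does not contain a proof, and the one computation that is carried out points in the wrong direction. Concavity of $t \mapsto \vol(A+tE)^{1/(n+1)}$ together with the endpoint values and the derivative at $t=0$ yields, exactly as you compute, a \emph{lower} bound on $A^n\cdot E$ in terms of the volume defect. The theorem asserts an \emph{upper} bound on $(A^n\cdot E)^2$ by that defect; a lower bound on $A^n E$ cannot be massaged into this, and no amount of convexity will produce it. The extended Khovanskii--Teissier inequality $(A^n\cdot\la\mu^*\mcL\ra)^{n+1}\ge (A^{n+1})^n\vol(\mcL)$ that you invoke is again a lower bound on $A^n\cdot\la\mu^*\mcL\ra$ and hence, modulo orthogonality, on $A^n\cdot E$: same wrong direction. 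The appeal to ``Hodge--Riemann on $(X,Y)\mapsto A^{n-1}\cdot X\cdot Y$ with $X=Y=E$'' is only a gesture: it produces terms like $A^{n-1}\cdot E^2$ (which can be of either sign for exceptional $E$), and you give no mechanism for relating these to the volume defect, nor for keeping the constant uniform over all $(\mu,A,E)$. You candidly flag this passage as ``the main obstacle'' and ``the delicate step''; that assessment is correct, but that step is the whole content of the theorem, so what remains is a description of the tools one would reach for, not a proof. There is also a small slip in the reduction: $\mu^*H$ is nef but not ample (it is numerically trivial on the exceptional locus of $\mu$), so $A+\epsilon\mu^*H$ is big and nef but in general not ample, and the claimed reduction to $A$ ample does not go through as stated. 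The actual \cite{BDPP13} argument produces the square by a second-order expansion of a perturbed volume along a nef direction $\mu^*H$ dominating $E$, which is not present in your outline.
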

We observe an immediate consequence of this estimate.
\begin{lem}\label{lem-Abd}
Let $\mu_m: \mcX_m\rightarrow \mcX$ be a sequence of birational morphisms such $\mu_m^*\mcL=\mcL_m+E_m$ where $\mcL_m$ is ample and $E_m$ is effective. Assume that the following conditions are satisfied:
\begin{enumerate}
\item $\lim_{m\rightarrow+\infty} \vol_{\mcX_m}(\mcL_m)=\vol_\mcX(\mcL)$.
\item $\lim_{m\rightarrow+\infty}\mcL_m^n\cdot \mu_m^*K_{\mcX}=\la \mcL^n\ra\cdot K_\mcX$.
\end{enumerate}
Then $\lim_{m\rightarrow+\infty}\fr_1(\mcL_m)=\fr_1(\mcL)$ if and only if
\begin{equation}\label{eq-cond3}
\lim_{m\rightarrow+\infty}\mcL_m^n\cdot K_{\mcX_m/\mcX}=0.
\end{equation}
In particular, if there exists a constant $C>0$ independent of $m$ such that for any irreducible component $F$ of $E_m$ we have $\ord_F(K_{\mcX_m/\mcX})\le C\cdot \ord_F(E_m)$, then we have the convergence:
$\lim_{m\rightarrow+\infty} \mathfrak{r}_1(\mcL_m)=\mathfrak{r}_1(\mcL)$.
\end{lem}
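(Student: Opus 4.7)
The plan is to decompose $K_{\mcX_m}$ along $\mu_m$ and read off the claim from the two hypotheses; the orthogonality estimate will then take care of the ``in particular'' part. First I would observe that, since $\mcL_m$ is ample, Lemma~\ref{lem-Zar} (specifically \eqref{eq-e1nef}) gives
\begin{equation*}
\fr_1(\mcL_m) \;=\; \mcL_m^n \cdot K_{\mcX_m}.
\end{equation*}
Because $\mu_m$ is a birational morphism between smooth varieties, $K_{\mcX_m} = \mu_m^* K_{\mcX} + K_{\mcX_m/\mcX}$, so
\begin{equation*}
\fr_1(\mcL_m) \;=\; \mcL_m^n \cdot \mu_m^* K_{\mcX} \;+\; \mcL_m^n \cdot K_{\mcX_m/\mcX}.
\end{equation*}
By hypothesis (2), the first summand converges to $\la \mcL^n\ra \cdot K_{\mcX} = \fr_1(\mcL)$. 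Consequently $\fr_1(\mcL_m) \to \fr_1(\mcL)$ if and only if $\mcL_m^n \cdot K_{\mcX_m/\mcX} \to 0$, which is exactly \eqref{eq-cond3}.

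For the ``in particular'' statement, note that the relative canonical divisor $K_{\mcX_m/\mcX}$ is effective and supported on the exceptional locus of $\mu_m$, and the component-wise bound $\ord_F(K_{\mcX_m/\mcX}) \le C \cdot \ord_F(E_m)$ gives the divisorial inequality $0 \le K_{\mcX_m/\mcX} \le C \cdot E_m$. Since $\mcL_m$ is nef, intersecting with $\mcL_m^n$ preserves this ordering, yielding
\begin{equation*}
0 \;\le\; \mcL_m^n \cdot K_{\mcX_m/\mcX} \;\le\; C \cdot (\mcL_m^n \cdot E_m).
\end{equation*}
Now I would invoke the Boucksom--Demailly--P\u{a}un--Peternell orthogonality estimate (Theorem~\ref{thm-BDPP}): with a fixed ample $H$ on $\mcX$ and a constant $C' = C'(\mcX, H)$,
\begin{equation*}
(\mcL_m^n \cdot E_m)^2 \;\le\; C' \cdot \bigl(\vol_{\mcX}(\mcL) - \vol_{\mcX_m}(\mcL_m)\bigr).
\end{equation*}
Hypothesis (1) makes the right-hand side tend to zero, so $\mcL_m^n \cdot E_m \to 0$, and hence $\mcL_m^n \cdot K_{\mcX_m/\mcX} \to 0$. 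Combining this with the first part of the lemma gives the desired convergence $\fr_1(\mcL_m) \to \fr_1(\mcL)$.

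I do not expect a serious obstacle: the only delicate point is checking that the hypothesis $\ord_F(K_{\mcX_m/\mcX}) \le C \cdot \ord_F(E_m)$ on components of $E_m$ is really enough to conclude $K_{\mcX_m/\mcX} \le C \cdot E_m$ as $\bQ$-divisors; this requires that every $\mu_m$-exceptional prime appears in $\supp(E_m)$, which one can ensure by a minor perturbation of the decomposition (or is implicit in the intended setup where $\mcL_m$ is constructed so that $E_m$ contains the full exceptional locus). Once this is granted, the argument is a direct combination of the Riemann--Roch coefficient definition, the canonical decomposition $K_{\mcX_m} = \mu_m^* K_{\mcX} + K_{\mcX_m/\mcX}$, and Theorem~\ref{thm-BDPP}.
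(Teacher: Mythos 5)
Your proof is correct, and it is the intended derivation: the paper states the lemma without a written proof, calling it an ``immediate consequence'' of Theorem~\ref{thm-BDPP}, and your argument (reduce to $\fr_1(\mcL_m)=\mcL_m^n\cdot K_{\mcX_m}$ via \eqref{eq-e1nef}, split off $\mcL_m^n\cdot\mu_m^*K_\mcX$ using hypothesis~(2), then control $\mcL_m^n\cdot E_m$ via the orthogonality estimate and hypothesis~(1)) is exactly that derivation.

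The one point you flag as delicate is in fact automatic and needs no perturbation of the decomposition. Since $\mcL_m$ is ample and $\mu_m^*\mcL$ is $\mu_m$-trivial, $-E_m=\mcL_m-\mu_m^*\mcL$ is $\mu_m$-ample; if some positive-dimensional fibre $Z$ of $\mu_m$ were not contained in $\supp(E_m)$, then $E_m$ would restrict to an effective divisor on the projective variety $Z$ whose negative is ample, which is impossible. Hence ${\rm Exc}(\mu_m)\subseteq\supp(E_m)$, so every $\mu_m$-exceptional prime is a component of $E_m$, and the componentwise bound does yield $0\le K_{\mcX_m/\mcX}\le C\,E_m$ as $\bQ$-divisors. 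With that in hand the rest of your argument goes through verbatim.
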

\begin{rem}
The above lemma suggests that the techniques from birational algebraic geometry might be useful for achieving \eqref{eq-cond3}. 
Indeed, our hope is that the MMP techniques (based on the work of Birkar-Casini-Hacon-McKernan) could be used to extract suitable exceptional divisors satisfying the conditions in the above lemma. Note that such type of techniques has prove to be very powerful in the study of K-stability for Fano varieties (see for example \cite{BLX19}). 

\end{rem}

By the works in \cite{DEL00, Laz04} and \cite{ELMNP09, BFJ09}, the sequence $\{\mcL_m\}$ that satisfy the first two conditions can be obtained by blowing up base ideals. Moreover one can also get $\mcL_m$ by blowing up appropriate asymptotic multiplier ideals, which satisfy the important Nadel-vanishing and global generation properties. We review the construction in \cite[11.4.B, Proof of Theorem 11.4]{Laz04} for the reader's convenience. Fix a very ample bundle $H$ on $\mcX$ such that $G:=K_\mcX+(n+2)H$ is very ample. For $m\ge 0$, set $M_m=m\mcL-G$. Given $\epsilon>0$ there exists $m\gg 1$ such that $\vol(M_m)\ge m^{n+1} (\vol(\mcL)-\epsilon)$. Set $\cJ=\cJ(\mcX, \|M_m\|)$, let $\mu_m: \mcX_m\rightarrow \mcX$ be a common resolution of $\cJ$ such that $\mu_m^*\cJ=\mcO(-\tilde{E}_m)$. Then $\mcL_m:=\mcL-\frac{1}{m}\tilde{E}_m$ is semiample and $\mcL_m^{n+1}\ge \vol(\mcL)-\epsilon$ (see \cite[11.4.B]{Laz04} for more details). By letting $\epsilon\rightarrow 0$, we see that the first condition in Lemma \ref{lem-Abd} is thus satisfied. 

Now we claim that in this construction, the second condition in Lemma \ref{lem-Abd} can also be satisfied. This fact will be used in the calculations of appendix \ref{sec-app}. To see this, we use some similar argument as in \cite[Proof of Theorem 2.13]{ELMNP09}. Choose $m_0\gg 1$ such that $m_0 \mcL-G=N'$ is effective. Fix a very ample divisor $H$ such that $N:=N'+H$ is ample. Then we have the inclusion
\begin{eqnarray*}
\fb(|(m-m_0)\mcL|)\mcO_\mcX(-N)&\subseteq& \fb(|(m-m_0)\mcL)\mcO(-N')\\
&\subseteq& \fb(|m\mcL-G|)=\fb(|M_m|)\subseteq \cJ(\|M_m\|).
\end{eqnarray*}
We can also assume that $\mu_m$ is both resolutions of $\fb(|M_m|)$ and $\fb(|(m-m_0)\mcL|)$ satisfying the identities $\mu_m^*\fb(|m\mcL-G|)=\mcO_{\mcX_m}(-\tilde{F}_m)$ and $\mu_m^*\fb(|(m-m_0)\mcL|)=\mcO_{\mcX_m}(-\tilde{Q}_m)$. Set $\mcL'_m=\mu_m^*(\mcL-\frac{G}{m})-\frac{1}{m}\tilde{F}_m$ and $\mcL''_m=\mu_m^*\mcL-\frac{1}{m-m_0}\tilde{Q}_m$. 

Fix any effective divisor $D$ on $\mcX$. Let $\tilde{D}_m$ be the strict transform of $D$ under $\mu_m$.  Then the above inclusion implies:
$$
\vol((\mcL_m+\frac{N}{m})|_{\tilde{D}_m})\ge \vol((\mcL'_m+\frac{N}{m})|_{\tilde{D}_m})\ge \vol(\mcL''_m|_{\tilde{D}_m}).
$$
Because $\mcL_m$ and $\mcL''_m$ are both semiample, this implies
\begin{equation}\label{eq-estm}
(\mcL_m+\frac{N}{m})^n\cdot \tilde{D}_m\ge \mcL''^n_m \cdot \tilde{D}_m.
\end{equation}
Now we can fix a very ample line bundle $H$ such that $\mu_m^*(mH)-\mcL_m=m \mu^*(H-\mcL)+\tilde{E}_m$ is effective.
Then for any $1\le i\le n$, we have:
$$
\limsup_{m\rightarrow+\infty} \frac{1}{m^{n}}(m\mcL_m^{n-i})\cdot N^i \cdot \tilde{D}_m\le \limsup_{m\rightarrow+\infty} \frac{1}{m^n} (mH)^{n-i}\cdot N^i \cdot \tilde{D}_m=0.
$$
By expanding the left-hand-side of \eqref{eq-estm}, this implies
\begin{equation*}
\la \mcL^n\ra\cdot D\ge \limsup_{m\rightarrow+\infty} \mcL_m^n \cdot \tilde{D}_m\ge \limsup_{m\rightarrow+\infty} \mcL''^n_m \cdot \tilde{D}_m=\|\mcL^n\cdot D\|
\end{equation*}
which, by using Theorem \ref{thm-restvol}, implies the equality 
$$\lim_{m\rightarrow+\infty} \mcL_m^n\cdot \mu_m^*D=\|\mcL^n\cdot D\|=\la \mcL^n\ra\cdot D.$$
Writing $-K_{\mcX}=D_1-D_2$ with $D_1, D_2$ effective, we then see that the second condition of Lemma \ref{lem-Abd} is satisfied too. 


Finally we point out that Conjecture \ref{conj-Fuj} holds true any for any big line bundle that admits a birational Zariski decomposition (in the sense of Cutkosky-Kawamata-Moriwaki). Unfortunately not all big line bundles admit such birational Zariski decomposition by the counterexamples of Nakayama \cite{Nak04}. On the other hand, we verify in the appendix that conjecture \ref{conj-Fuj} indeed holds for some of Nakayama's examples. 
Indeed, we will show that in these examples the bound of discrepancies in the above lemma is indeed satisfied.
So it seems to be very interesting to know whether \eqref{eq-cond3} can be achieved in general. 
\begin{defn}
We say a big line bundle $\mcL$ admits a birational Zariski decomposition if there is a modification $\mu: \tilde{\mcX}\rightarrow \mcX$, a nef $\bR$-divisor $\mcP$ and an $\bR$-effective divisor $\mcN$ on $\tilde{\mcX}$ with the following properties:
\begin{itemize}
\item $\mu^*\mcL=\mcP+\mcN$.
\item For any positive integer $m>0$, the map
\begin{equation}
H^0(\tilde{\mcX}, \mcO_{\tilde{\mcX}}(\lfloor mP\rfloor))\rightarrow H^0(\tilde{\mcX}, \mcO_{\tilde{\mcX}}(m\mcL))
\end{equation}
induced by the section $e_m$ is an isomorphism, where $e_m$ is the canonical section of $\lceil m \mcN\rceil$.
\end{itemize}
\end{defn}
\begin{lem}
If a big line bundle $\mcL$ admits a birational Zariski decomposition, then the conjecture \ref{conj-Fuj} for $\mcL$ is true.
\end{lem}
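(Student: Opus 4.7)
The strategy is to exploit the nefness of the positive part $\mcP$ in the decomposition $\mu^{*}\mcL = \mcP+\mcN$ to avoid any genuine Fujita approximation of $\mcL$: the sequence in Conjecture \ref{conj-Fuj} will be produced by a linear perturbation inside the ample cone, all living on the \emph{fixed} model $\tilde{\mcX}$ (so $\mu_m:=\mu$ is constant).

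First I would record the two numerical targets on $\tilde{\mcX}$. Birational invariance (Lemma \ref{lem-pullinv}) and the section-ring condition of the birational Zariski decomposition give $\vol(\mcL)=\vol(\mu^{*}\mcL)=\vol(\mcP)=\mcP^{n+1}$, where the last equality uses that $\mcP$ is nef; and the nef case of Lemma \ref{lem-Zar}, combined once more with Lemma \ref{lem-pullinv}, gives $\fr_1(\mcL)=\fr_1(\mu^{*}\mcL)=\mcP^{n}\cdot K_{\tilde{\mcX}}$. In other words, once we land on $\tilde{\mcX}$ both invariants of $\mcL$ are ordinary intersection polynomials in the nef class $\mcP$.

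Next I would construct the approximation. Pick an ample $\bQ$-divisor $H$ on $\tilde{\mcX}$; Kodaira's lemma applied to the big class $\mcP$ yields an effective $\bR$-divisor $Z$ with $\mcP\sim_{\bR} H+Z$ (after rescaling $H$). For rational $\delta\in(0,1)$ set
\[
\mcL_{\delta}:=(1-\delta)\mcP+\delta H,\qquad E_{\delta}:=\delta Z+\mcN.
\]
Then $\mcL_{\delta}$ is ample (nef plus ample), $E_{\delta}$ is effective, and $\mcL_{\delta}+E_{\delta}\sim_{\bR}\mu^{*}\mcL$. Because $\mcL_{\delta}$ is ample, $\vol(\mcL_{\delta})=\mcL_{\delta}^{n+1}$ and $\fr_1(\mcL_{\delta})=\mcL_{\delta}^{n}\cdot K_{\tilde{\mcX}}$. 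As $\delta\to 0$ the class $[\mcL_{\delta}]$ converges to $[\mcP]$ in $N^{1}(\tilde{\mcX})_{\bR}$, so continuity of these intersection polynomials on the ample cone gives $\vol(\mcL_{\delta})\to\mcP^{n+1}=\vol(\mcL)$ and $\fr_1(\mcL_{\delta})\to \mcP^{n}\cdot K_{\tilde{\mcX}}=\fr_1(\mcL)$, exactly the content of Conjecture \ref{conj-Fuj}.

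There is no real conceptual obstacle: the nefness of $\mcP$ collapses the positive intersection product $\la\mcL^{n}\ra\cdot K_{\mcX}$ to an ordinary intersection on $\tilde{\mcX}$, which behaves continuously under ample perturbation, so neither multiplier-ideal blow-ups nor the orthogonality estimate of Theorem \ref{thm-BDPP} nor Lemma \ref{lem-Abd} are needed. The one minor technical point is $\bQ$ vs.\ $\bR$ coefficients when $\mcP,\mcN$ are merely $\bR$-divisors: one rounds the coefficients of $E_{\delta}$ up to nearby rationals, producing an effective $\bQ$-divisor $E_{\delta}^{\bQ}\ge E_{\delta}$, and the complementary class $\mu^{*}\mcL-E_{\delta}^{\bQ}$ (a small $\bR$-perturbation of $\mcL_{\delta}$) remains in the open ample cone for sufficiently small rounding error, hence is an ample $\bQ$-divisor as required.
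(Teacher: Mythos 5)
Your proof is correct and is essentially the same as the paper's: both reduce $\vol(\mcL)$ and $\fr_1(\mcL)$ to intersection numbers of the nef part $\mcP$ via Lemmas \ref{lem-pullinv} and \ref{lem-Zar}, apply Kodaira's lemma to perturb $\mcP$ slightly into the ample cone on the fixed model $\tilde{\mcX}$, and conclude by continuity of intersection forms; your $\delta\to 0$ parametrization with $\mcL_\delta=(1-\delta)\mcP+\delta H$ is merely a reparametrization of the paper's $\mcL_m=\frac{1}{m+k}(m\mcP+A)$. Your treatment of the $\bQ$-versus-$\bR$ coefficient issue (rounding $E_\delta$ up to nearby rationals and absorbing the error into the open ample cone) is slightly more careful than the paper's brief remark about perturbing $A$, but the content is the same.
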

\begin{proof}
By Lemma \ref{lem-pullinv} and \eqref{eq-e1nef}, we have $\mathfrak{r}_1(\mcL)=\mathfrak{r}_1(\mu^*\mcL)=\mathfrak{r}_1(\mcP)=\mcP^n\cdot K_{\tilde{\mcX}}$. Choose any ample divisor $A$ on $\tilde{\mcX}$. Because $\mcP$ is big and nef, we know that 
for $k\gg 1$, $k\mcP-A=\Delta_k$ is effective. So we get:
\begin{equation}
(m+k)\mcP=m\mcP+A+E_k,
\end{equation}
which implies the decomposition over $\tilde{\mcX}$:
\begin{equation}
\mu^*\mcL=\mcP+\mcN=\frac{1}{m+k}(m\mcP+A)+\frac{1}{m+k}\Delta_k+\mcN.
\end{equation}
By perturbing the coefficients of $A$, we can assume that $m\mcP+A$ is a $\bQ$-divisor. 
Set $\mcL_m=\frac{1}{m+k}(m\mcP+A)$. Then it is easy to see that \eqref{eq-RRconv2} holds true. 

\end{proof}

\begin{rem}\label{rem-spherical}
If $(X, L)$ is a polarized spherical manifold, it is known that its models in the sense of Definition \ref{def-model} is a Mori dream space (see the appendix A by Y. Odaka to \cite{Del20}). 
Since Zariski decomposition of big lines bundles always exist on Mori dream spaces, the above lemma in the $\bC^*$-equivariant setting gives an explanation why the Yau-Tian-Donaldson conjecture holds for polarized spherical manifolds. See \cite[Appendix A]{Del20} for a slightly different proof of this fact (again based on Theorem \ref{thm-YTD}).
\end{rem}



\appendix

\section{Conjecture \ref{conj-Fuj} for Nakayama's examples without birational Zariski decomposition}\label{sec-app}

In this appendix, we will use Lemma \ref{lem-Abd} to show that conjecture \ref{conj-Fuj} is indeed true for some examples of big line bundles that do not have a birational Zariski decomposition. Such examples were first discovered by Nakayama \cite{Nak04}. Here we will do a case study based on the construction of Fujita approximation in \cite[Theorem 11.4.4]{Laz04} and the calculation of asymptotic multiplier ideals for Nakayama's examples in the work of Koike \cite{Koi15}.

We first write down some notations. Set $S=E\times E$ for an elliptic curve $E$ without complex multiplication. Then the pseudoeffective cone ${\rm PE}(S)$ coincides with the nef cone ${\rm Nef}(S)$. Fix a point $p\in E$ and consider in $N^1(S)_\bR$ three classes:
\begin{equation*}
f_1=[\{P\}\times E]=:[F_1], \quad f_2=[E\times \{P\}]=:[F_2], \quad \delta=[\Delta]
\end{equation*}
where $\Delta\subset E\times E$ is the diagonal. Then $N^1(S)_\bR$ is spanned by $\{f_1, f_2, \delta\}$ and the description of the nef cone is known (see \cite[Lemm 1.5.4]{Laz04a}): $\alpha=x\cdot f_1+y\cdot f_2+z\cdot \delta\in N^1(S)_\bR$ is nef if and only if 
\begin{equation}
xy+xz+yz\ge 0, \quad x+y+z\ge 0.
\end{equation}

By standard linear algebra, we can use the following linear transformation to diagonalize the above relation:
\begin{eqnarray*}
&&l_1=\frac{1}{6}(f_1+f_2-2\delta), \quad l_2=\frac{1}{6}(-\sqrt{3}f_1+\sqrt{3}f_2), \quad \frac{1}{6}(f_1+f_2+\delta)\\
&&a=x+y-2z, \quad b=-\sqrt{3}x+\sqrt{3}y, \quad c=2(x+y+z).
\end{eqnarray*}
such that $\alpha=a l_1+b l_2+c l_3\in N^1(S)_\bR$ is nef if and only if 
\begin{equation}\label{eq-nefcone}
c^2\ge a^2+b^2, \quad c\ge 0.
\end{equation}
Let $L_i, i=0,1,2$ be three line bundles over $S$. Set 
\begin{equation}
X=\bP(\cO_S\oplus (L_1-L_0)\oplus (L_2-L_0)) \cong \bP(L_0\oplus L_1\oplus L_2).
\end{equation}
Denote by $H=\cO_X(1)$ the tautological line bundle. 

We use the description of $X$ as a toric bundle over $S$ as in \cite{Nak04}.
Let $\Sigma$ denote the standard fan of $\bP^2$, i.e. the fan generated by three cones:
\begin{equation*}
\sigma_1={\rm Cone}\{e_1, e_2\}, \quad \sigma_2={\rm Cone}\{e_2, -(e_1+e_2)\}, \quad \sigma_3={\rm Cone}\{-(e_1+e_2), e_1\}.
\end{equation*}
Let $h: \bR^2\rightarrow \bR$ be the piecewise linear function on $\Sigma$ satisfying $h(e_1)=h(e_2)=0$ and $h(-(e_1+e_2))=-1$.
Then $X$ is the toric bundle associated to $\Sigma$ and $h$ determines the line bundle $H$. 
Set $\mathbb{L}=\pi^*L_0+D_h$. By a result of Cutkosky (see \cite[Lemma 6.1]{Koi15}), $\bbL$ is a big line bundle if and only if there exists $(k_0,k_1,k_2)\in \bN^3$ such that $L_0^{k_0}\otimes L_1^{k_1}\otimes L_2^{k_3}$ is an ample line bundle over $S$. 
Moreover it is well-known that the canonical line bundle of the projective bundle $X$ is given by:
\begin{equation}
K_X=\pi^*(K_S+(L_1-L_0)+(L_2-L_0))-3 H=L_1+L_2-2L_0-3H.
\end{equation}
The last identity uses the triviality of $K_S$. 

We will consider the example in \cite[Example 6.5]{Koi15}. Set $L_0=4 F_1+4 F_2+\Delta$, $L_1=\mcO_V$, $L_2=\mcO_V(-F_1+9 F_2+\Delta)$. Then
\begin{equation}\label{eq-c1Li}
c_1(L_0)=6(l_1+3l_3), \quad c_1(L_1)=0, \quad c_1(L_2)=6 l_1+10\sqrt{3} l_2+18 l_3.
\end{equation}
Because  $c_1(L_0)$ is in the interior of the nef cone, $L_0$ is ample. Note that $H$ is relatively ample. 
So it is easy to see that there exist $a, b\in \bZ_{>0}$ such that $\frac{a L_0+b H-K_X}{n+1}$ is a very ample line bundle. Set $G=a L_0+b H$ and 
\begin{eqnarray*}
M_p&=&p\, \bbL-G=p (L_0+H)-(a L_0+bH)=(p-a)L_0+(p-b)H\\
&=&(p-b)\left(\frac{p-a}{p-b}L_0+H\right)=:(p-b)(Q_p+H). 
\end{eqnarray*}
Set $\cJ_p:=\cJ(X, \|M_p\|)$. 
Let $\mu_p: Y_p\rightarrow X$ be the normalized blowup of $\cJ_p$ with $E_p:=\mu_p^*\cJ_p=\mcO_{Y_p}(-\sum_i c_{p,i} E_{p,i})$.  Set $A_p=\mu_p^*(\bbL)-\frac{1}{p}E_p$.

By the discussion after Lemma \ref{lem-Abd}, 
we known that $(Y_p, A_p)$ satisfies the first two conditions of Lemma \ref{lem-Abd}.  
So, by Lemma \ref{lem-Abd}, it suffices to show that there exists $C>0$ such that $A_X(E_{i,p})\le C p^{-1} c_{p,i}$ for any $i, p$. 

For any $\bQ$-line bundle $L$ on $S$ and with $h$ as above, define a compact convex set  following \cite[\S 2.b]{Nak04} (we identify line bundles with their Chern classes):
\begin{equation}
\Box(L, h)=\{(x, y)\in \bR_{\ge 0}^2; x+y\le 1 \text{ and } L+x(L_1-L_0)+y (L_2-L_0)\in {\rm PE}(S)\}.
\end{equation}

Then it is straight-forward to use \eqref{eq-nefcone} and \eqref{eq-c1Li} to get:
\begin{eqnarray*}
\Box(Q_p, h)=\left\{(x,y)\in \bR^2_{\ge 0}; \; \; x+\frac{5}{2\sqrt{6}}\frac{p-b}{p-a}y\le 1\right\}.
\end{eqnarray*}
Let $\vphi_{p, \min}$ be the metric of minimal singularity on $M_p$. Then it is known that $\cJ(\vphi_{p, \min})=\cJ(\|M_p\|)=\cJ_p$. 
For each fan $\sigma_i, i=0,1,2$, there exists an open set $U_i\cong S\times \bC^2$ which is an affine toric bundle over $S$.  
Applying the result in \cite[5.2]{Koi15}, $\cJ_p$ is trivial on $U_0, U_1$, and over $U_2$ we can choose the canonical affine coordinate $(z_1, z_2)$ on $\bC^2$ such that the multiplier ideal
is generated by monomials: 
\begin{equation}
\cJ_p(U_2)=\left\la z_1^{m_1} z_2^{m_2};\;\; (m_1+1, m_2+1)\in {\rm Int}((p-b)S_p)\cap \bZ^2\right\ra
\end{equation}
where
\begin{equation}
S_p=\left\{(x,y)\in \bR^2_{\ge 0}; \;\; x+\frac{y}{1-\frac{2\sqrt{6}(p-a)}{5(p-b)}}\ge 1 \right\},
\end{equation}
is generated by the exponents $(0, 1), (1,0), (0, 1-\frac{2\sqrt{6}(p-a)}{5(p-b)})$, which are the images of $(0,0)$, $(1,0)$, $(0,\frac{2\sqrt{6}(p-a)}{5(p-b)})$ under the linear map $(a,b)\mapsto (\la(a,b)-m_{\sigma_3}, v_i\ra)_{i=1,2}=(a, 1-a-b)$
with $m_{\sigma_3}=(0,1), v_1=(1,0), v_2=(-1, -1)$ (see \cite[Definition 4.1]{Koi15}).
In particular, the multiplier ideal sheaf is co-supported on $\bP(L_2)\subset \bP(L_0\oplus L_1\oplus L_2)$. Moreover, as pointed out in \cite{Koi15}, by using \cite[Theorem 2.10]{Nak04}, we know that the line bundle $\bbL$ does not admit birational Zariski decomposition.

Equivalently, we have:
\begin{equation}
\cJ_p(U_2)=\left\la z_1^{m_1}z_2^{m_2};\;\; (m_1, m_2)\in \bZ_{\ge 0}, \frac{m_1}{\alpha_p}+\frac{m_2}{\beta_p}> 1 \right\ra.
\end{equation}
where 
$
\alpha_p=\frac{p-b}{d_p}, \quad \beta_p=\frac{(1-\frac{2\sqrt{6}}{5})p+\frac{2\sqrt{6}}{5}a-b}{d_p}, \quad d_p=1-\frac{1}{p-b}-\frac{1}{(1-\frac{2\sqrt{6}}{5})p+\frac{2\sqrt{6}}{5}a-b}$. 
We only need to know that there exists $C=\frac{4.9-2\sqrt{6}}{5}>0$ such that  $\alpha_p\ge C p, \beta_p\ge C p$ for $p\gg 1$ since $d_p=1+O(p^{-1})$.

Because the multiplier ideal is monomial, we can use the result about Rees valuations of monomial ideals to see that the blow-up of $\cJ_p$ corresponds to the sides of the Newton-polygons of $\cJ_p$ (see \cite[15.4]{HS06}). Indeed, such blowup also corresponds to a subdivision of the cone $\sigma_2$. 

Now denote the sides of the Newton polygon be given by $\overline{P_{i-1} P_{i}}, 1\le i\le r$ with $P_i=(x_i, y_i)$. Then it is easy to see that
$P_0=(0, \lfloor \beta\rfloor +1)$ and $P_{r}=(\lfloor \alpha\rfloor+1, 0)$. Note that $(a_i, b_i)=(y_{i-1}-y_{i}, x_{i}-x_{i-1})\in \bR_{>0}^2$ is a normal vector of $\overline{P_{i-1}P_{i}}$. The monomial valuation $\ord_{E_i}$ that corresponds to the side $\overline{P_{i}P_{i+1}}$ can be chosen to be given by the weighted blowup with weights $(a_i, b_i)$. Set $\tau_i=b_i/a_i>0$. It is easy to see that:
\begin{eqnarray*}
w(E_i):=\frac{A(E_i)}{\ord_{E_i}(\cJ_p)}&=&\frac{a_i+b_i}{a_i x_{i-1}+b_i y_{i-1}}=\frac{1+\tau_i}{x_{i-1}+y_{i-1} \tau_i}\\
&=&\frac{a_i+b_i}{a_i x_i+b_i y_i}=\frac{1+\tau_i}{x_{i}+y_{i}\tau_i}.
\end{eqnarray*}
As a consequence, we have:
\begin{equation}
w(E_{i})-w(E_{i+1})=\frac{1+\tau_i}{x_i+y_i\tau_i}-\frac{1+\tau_{i+1}}{x_i+y_i\tau_{i+1}}=\frac{(y_i-x_i)(\tau_{i+1}-\tau_i)}{(x_i+y_i\tau_i)(x_i+y_i\tau_{i+1})}.
\end{equation}
From this identity, we easily see that $\max\{w(E_i); 1\le i\le r\}=\max\{w(E_1), w(E_r)\}$. 
Now note that $\tau_1^{-1}$ is at most the absolute value of the slope of the line $\overline{P_0 P'}$ where $P'$ is the point $(1, -\frac{\beta}{\alpha}+\beta)$ one the line connecting $(\alpha, 0)$ and $(0, \beta)$, which gives the inequality:
\begin{eqnarray*}
w(E_1)&=&\frac{1+\tau_1}{(\lfloor \beta\rfloor+1) \tau_1}=\frac{1}{\lfloor \beta\rfloor+1}+\frac{1}{(\lfloor \beta\rfloor+1) \tau_1}\le \frac{1}{\beta}+
\frac{1}{\lfloor \beta\rfloor+1}(\lfloor \beta\rfloor+1-\beta+\frac{\beta}{\alpha})\\
&=&\frac{1}{\beta}+\frac{\lfloor \beta\rfloor+1-\beta}{\lfloor\beta\rfloor+1}+\frac{\beta}{\lfloor \beta\rfloor+1}\frac{1}{\alpha}=O(p^{-1}).
\end{eqnarray*}
By the same argument (or just by symmetry), we also get $w(E_r)=O(p^{-1})$. According to the previous discussion, the verification of 3rd condition in Lemma \ref{lem-Abd} is complete.
\begin{rem}
It is easy to see that the above arguments, which reduce the problem to the estimates for Rees valuations of monomial ideals, works for many more examples of Nakayama type. 
\end{rem}



\vskip 3mm

\noindent
Department of Mathematics, Purdue University, West Lafayette, IN, 47907-2067.

\noindent
{\it Current address:} Department of Mathematics, Rutgers University, Piscataway, NJ 08854-8019.

\noindent
{\it E-mail address:} chi.li@rutgers.edu

\vskip 2mm






\end{document}